\def\eps{{\varepsilon}}
\newcommand{\qq}{{\mathbb Q}}
\newcommand{\zz}{{\mathbb Z}}
\newcommand{\pp}{{\mathbb P}}
\newcommand{\Pic}{{\rm{Pic}}}
\newcommand{\Kod}{{\rm{Kod}}}
\newcommand{\sh}{\mathscr}
\newcommand{\Mg}{\mathcal{M}_g}
\newcommand{\Mgbar}{\overline{\mathcal{M}}_g}
\newcommand{\Mgnbar}{\overline{\mathcal{M}}_{g,n}}
\newcommand{\Fg}[1]{\mathcal{F}_{#1}}
\newcommand{\Fgn}[2]{\mathcal{F}_{{#1}, {#2}}}
\newcommand{\M}[2]{\mathcal{M}_{{#1}, {#2}}}
\newcommand{\Mbar}[2]{\overline{\mathcal{M}}_{{#1}, {#2}}}
\theoremstyle{plain}
\newtheorem{theorem}{Theorem}[section]
\newtheorem{lemma}[theorem]{Lemma}
\newtheorem{proposition}[theorem]{Proposition}
\theoremstyle{definition}
\title{Two moduli spaces of Calabi-Yau type}
\author[I. Barros]{Ignacio Barros}
\author[S. Mullane]{Scott Mullane}
\address[I. Barros]{
Department of Mathematics\\
Northeastern University\\
360 Huntington Ave.\\
Boston, MA 02115, USA} 
\email{i.barrosreyes@northeastern.edu}
\address[S. Mullane]{
Institut f\"{u}r Mathematik\\
Goethe-Universit\"{a}t Frankfurt\\
Robert-Mayer-Str. 6-8\\
60325 Frankfurt am Main, Germany} 
\email{mullane@math.uni-frankfurt.de}
\begin{document}

\maketitle

\begin{abstract}
We show $\Mbar{10}{10}$ and $\mathcal{F}_{11,9}$ have Kodaira dimension zero. Our method relies on the construction of a number of curves via nodal Lefschetz pencils on blown-up $K3$ surfaces. The construction further yields that any effective divisor in $\overline{\mathcal{M}}_{g}$  with slope $<6+(12-\delta)/(g+1)$ must contain the locus of curves that are the normalization of a $\delta$-nodal curve lying on a $K3$ surface of genus $g+\delta$.
\end{abstract}


\nopagebreak



\section*{Introduction}

It has long been established that for $g\geq2$, the moduli spaces $\Mbar{g}{n}$ are all of general type except for finitely many pairs $(g,n)$ occuring in relatively low genus. Hence though the description of the map
$$(g,n)\mapsto \rm{Kod}\left(\Mbar{g}{n}\right)$$ 
is far from being complete, it is unknown only for relatively small $g$ and $n$. Since the Severi's conjecture \cite{S} was disproven by Harris and Mumford \cite{HM}, the complete description of the mentioned map has become a major task in the study of moduli spaces of curves. See \cite{Se,CR1,CR2,EH1,EH2,L,BV,FP,F}. 

For a fixed $g$, when $n$ increases the transition from Kodaira dimension $-\infty$ to general type is rather sudden due to the fact that the forgetful map 
$$\pi:\Mbar{g}{n+1}\to\Mbar{g}{n}$$
has fibers of general type and the dualizing sheaf $\omega_{\pi}$, though not big, has some positivity properties discussed briefly in \S \ref{Preliminaries}.

In almost all the known cases when $\Mbar{g}{n}$ is not of general type it has some rational attributes, namely, it is at least uniruled. The only exception of this is when $(g,n)=(11,11)$, where ${\rm{Kod}}\left(\Mbar{11}{11}\right)=19$, cf. \cite[Thm. 0.5]{FV1}. Until now, this was the only example of intermediate Kodaira type for $g\geq2$. 
 
In this paper we continue the search initiated by Farkas and Verra \cite{FV1} for moduli spaces $\Mbar{g}{n}$ of intermediate type, with the hope of shedding some light on the mysterious transition from uniruled to general type when $g$ or $n$ increase. The main result of this paper concerns the genus $g=10$ case.

\begin{theorem}
\label{theorem1}
The moduli space $\Mbar{10}{10}$ has Kodaira dimension zero.
\end{theorem}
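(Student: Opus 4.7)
The plan is to show both bounds $\kappa(\overline{\mathcal{M}}_{10,10}) \geq 0$ and $\kappa(\overline{\mathcal{M}}_{10,10}) \leq 0$ simultaneously: the first by exhibiting an explicit effective representative of the canonical class, the second by producing a covering family of curves $R$ with $R \cdot K = 0$, which forces every pluricanonical section to be constant on $R$ and hence the image of the pluricanonical map to be a point.

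For effectivity, I would decompose $K_{\overline{\mathcal{M}}_{10,10}}$ in the standard basis of $\lambda$, the $\psi_i$'s, and the boundary divisors $\delta_{\mathrm{irr}}$ and $\delta_{i,S}$, and write it as a non-negative $\qq$-combination of (a) the pullback of the Farkas--Popa K3 divisor class $[\mathcal{K}_{10}]$ from $\overline{\mathcal{M}}_{10}$ under the forgetful map, (b) the individual $\psi$-classes, and (c) effective boundary classes, in particular the rational-tail divisors $\delta_{0,S}$. The key numerical input is that the slope of $\mathcal{K}_{10}$ equals $7$, exceeding the canonical slope $13/2$ on $\overline{\mathcal{M}}_{10}$; this gap is precisely what makes the coefficients in the decomposition come out non-negative after pullback and the further correction by $\psi$-classes and boundary contributions picked up under $\pi\colon\overline{\mathcal{M}}_{10,10}\to\overline{\mathcal{M}}_{10}$.

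For the upper bound, I would construct the moving curves via the nodal Lefschetz pencil construction alluded to in the abstract. Starting from a general K3 surface $S$ of genus $g+\delta=15$, take an irreducible $\delta=5$-nodal member of the primitive linear system (general in the corresponding Severi variety), blow up $S$ along the five nodes to obtain $\tilde S$, and consider a Lefschetz pencil on $\tilde S$ whose generic fibre is a smooth curve of genus $10$. The ten exceptional $(-1)$-curves over the nodes give ten disjoint sections of the pencil, and pushing forward produces a complete curve $R\subset\overline{\mathcal{M}}_{10,10}$. Using adjunction on $S$ and the standard intersection-theoretic formulas for Lefschetz pencils on $\tilde S$, I would compute $R\cdot\lambda$, $R\cdot\psi_i$, $R\cdot\delta_{\mathrm{irr}}$ and the reducible boundary intersections, and verify $R\cdot K_{\overline{\mathcal{M}}_{10,10}}=0$; the vanishing ultimately reflects the triviality of $K_S$. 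Varying $S$, the nodal member, and the pencil produces a family of such curves, and I would argue that these sweep out a Zariski-dense subset of $\overline{\mathcal{M}}_{10,10}$.

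The most delicate step is establishing this covering property: one must show that a general $(C,p_1,\ldots,p_{10})\in\overline{\mathcal{M}}_{10,10}$ arises, up to the natural $S_5$-action permuting pairs of marked points, as the normalization of a $5$-nodal curve on some K3 surface of genus $15$, the ten marked points being the preimages of the five nodes. This is a Brill--Noether/Mukai-style surjectivity statement in the pointed setting, requiring both a dimension count matching $\dim\overline{\mathcal{M}}_{10,10}=37$ against the combined parameters of the K3, the Severi locus, the pencil and the pairing of the marked points, and a transversality analysis. Controlling the boundary contributions to ensure that the identity $R\cdot K=0$ survives degenerations of the pencil, and that no spurious boundary components appear in $R$, is where I expect the subtlest issues to arise.
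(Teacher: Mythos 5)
Both halves of your proposal contain genuine gaps.

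\emph{Effectivity.} The decomposition you describe cannot close numerically. Any non-negative combination of $\pi^*\overline{\mathcal{K}}$, the $\psi_i$ and effective boundary classes has $\lambda$-coefficient $7a$ and $\delta_{\rm{irr}}$-coefficient at least $-a$, where $a\geq 0$ is the multiplicity of $\pi^*\overline{\mathcal{K}}$. Matching $K_{\Mbar{10}{10}}=13\lambda+\sum\psi_i-2\delta_{\rm{irr}}-\cdots$ forces $a=13/7$, and then the $\delta_{\rm{irr}}$-coefficient of your combination is at least $-13/7>-2$: you cannot reach $-2$. The slope comparison in fact goes the wrong way for your purposes: since $13/2<7$, one has $13\lambda-2\delta_{\rm{irr}}=2(7\lambda-\delta_{\rm{irr}})-\lambda$, and the leftover $-\lambda$ cannot be absorbed by $\psi$-classes or boundary. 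The indispensable missing ingredient is Logan's divisor $\overline{D}_{10}=-\lambda+\sum\psi_i-\cdots$, whose negative $\lambda$-coefficient is precisely what makes the paper's decomposition $K_{\Mbar{10}{10}}=\overline{D}_{10}+2\pi^*\overline{\mathcal{K}}+\sum c_{i:S}\delta_{i:S}$ work.

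\emph{Upper bound.} The family you propose --- normalizations of $5$-nodal genus-$15$ curves on $K3$ surfaces, marked at the ten node preimages --- sweeps out a locus of dimension at most $19+10=29$ ($19$ for the $K3$ plus $10$ for the Severi variety of $5$-nodal members of $|H|$; the markings add nothing), far short of $\dim\Mbar{10}{10}=37$. It is not a covering family; this is exactly the ``delicate surjectivity'' you flagged, and it fails. Moreover its $K$-degree is not zero: by Proposition~\ref{intersections} applied to $\Gamma(15,5,0)$ one gets $\lambda$-degree $352$, $\delta_{\rm{irr}}$-degree $2336$, $\psi_i$-degree $20$ for each of the ten markings, and degree $2$ on each of the five relevant $\delta_{0:S}$, hence $K$-degree $4576+200-4672-20=84>0$. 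The curve with $K$-degree zero is instead $\Gamma(11,1,8)$ ($1$-nodal genus-$11$ curves on $K3$ surfaces through $8$ further points), and it covers only Logan's divisor $\overline{D}_{10}$, not all of $\Mbar{10}{10}$. Consequently the bound $\kappa\leq 0$ cannot be obtained from a single moving curve of $K$-degree zero: the paper instead uses curves of negative degree on each rigid component separately, peels $\overline{D}_{10}$ and the divisors $\delta_{0:S}$ out of $|nK_{\Mbar{10}{10}}|$ by an iterative limiting argument (Proposition~\ref{mainProp}), and bounds the remainder by a rigid divisor supported on $\overline{\mathcal{K}}$ and the higher boundary. Your overall framework (effective decomposition plus covering curves) is the right one, but both the decomposition and the curve you chose are the wrong ones.
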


Much less is known when it comes to moduli spaces of $K3$ surfaces. Following Mukai's notation, let $\mathcal{F}_g$ be the moduli space of primitively polarized K3 surfaces of genus $g$. For $K3$ surfaces the same principle holds, meaning that all but finitely many $\mathcal{F}_g$'s are of general type, cf. \cite[Thm. 1]{GHS}. Mukai in a series of papers \cite{M1,M2,M3,M4,M5} established structure theorems of $\mathcal{F}_g$, for small $g$. From Mukai's results it follows that $\mathcal{F}_g$ is unirational for $g\leq 13$ and $g=16, 18, 20$.

Let $\mathcal{F}_{g,n}$ be the moduli space of $n$-pointed polarized $K3$ surfaces of genus $g$, consisting of tuples $(S,H,x_1,\ldots,x_n)$, where $(S,H)$ is a primitively polarized $K3$ surface of genus $g$ and $x_1,\ldots, x_n\in S$ are $n$ ordered points on $S$. Notice that when $n\geq 1$, the moduli space $\Fgn{g}{n}$ is never of general type. Indeed, the forgetful map
$$\Fgn{g}{n}\to\Fg{g}$$
is a morphism fibered in Calabi-Yau varieties and hence by Iitaka's addition formula 
$$\Kod\left(\Fgn{g}{n}\right)\leq \dim\Fg{g}=19.$$
However, by \cite{K} 
$$\Kod(\Fgn{g}{n+1})\geq \Kod(\Fgn{g}{n}).$$ 
Hence we observe that the complexity of $\Fgn{g}{n}$ cannot decrease when the number of markings increases. Further, when $\Fg{g}$ is of general type, $\Fgn{g}{n}$ has maximal Kodaira dimension equal to $19$. It remains unknown if all but finitely many $\Fgn{g}{n}$'s have Kodaira dimension $19$. However, $\Fgn{g}{n}$ has maximal Kodaira dimension for $g\geq62$ and some other values $g\geq 47$, cf. \cite[Thm. 1]{GHS}. It has also maximal Kodaira dimension for $(g,n)=(11,11)$ and negative Kodaira dimension for $(g,n)=(14,1)$, $g\leq 10$ and $n\leq g+1$, or $g=11$ and $n\leq 7$, cf. \cite[Thm. 1.1, Thm. 5.1]{FV2} and \cite[Thm. 1]{B}. The second of our findings is the following:

\begin{theorem}
\label{theorem2}
The moduli space $\mathcal{F}_{11,9}$ has Kodaira dimension zero.
\end{theorem}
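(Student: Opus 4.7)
The strategy is to reduce to Theorem~\ref{theorem1} through a correspondence involving $1$-nodal curves on $K3$ surfaces of genus~$11$. The guiding observation is the dimension coincidence
\[
\dim\Fgn{11}{9} \;=\; 19 + 2\cdot 9 \;=\; 37 \;=\; 3\cdot 10 - 3 + 10 \;=\; \dim\Mbar{10}{10},
\]
which suggests that the Calabi--Yau nature of the two spaces is linked through the normalization of such nodal curves lying in the primitive linear system of the $K3$.

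Concretely, I would introduce the incidence variety
\[
\mathcal{P} \;=\; \bigl\{(S,H,x_1,\ldots,x_9,C)\,:\,(S,H)\in\Fg{11},\ x_i\in S\text{ distinct},\ C\in|H|\text{ is }1\text{-nodal},\ x_i\in C\bigr\},
\]
of dimension~$38$, together with two natural projections. The first, $\pi_1\colon\mathcal{P}\to\Fgn{11}{9}$, drops~$C$; its general fibre is the $1$-nodal discriminant inside the $2$-dimensional linear subsystem $|H\otimes\mathcal{I}_{x_1,\ldots,x_9}|\cong\pp^2$, whose degree is computable by the same Lefschetz-pencil Euler-characteristic count that underlies Theorem~\ref{theorem1}. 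The second, $\pi_2\colon\mathcal{P}\to\Mbar{10}{11}$, sends $(S,H,x_\bullet,C)$ to the normalization $\widetilde{C}$ of genus~$10$, marked by the $9$ lifts of the $x_i$ and the two preimages of the node. Mukai's structure theorem for $\Fg{11}$, according to which a general smooth genus-$11$ curve lies on a unique $K3$, suggests that $\pi_2$ is birational onto its image.

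With this in place, Theorem~\ref{theorem1} enters as follows. The forgetful map $\Mbar{10}{11}\to\Mbar{10}{10}$, forgetting one of the two node preimages, is a universal-curve fibration with generic fibre of genus~$10$; Iitaka-type subadditivity combined with $\kappa(\Mbar{10}{10})=0$ controls the Kodaira dimension of the image of $\pi_2$, and hence of $\mathcal{P}$. A second application of subadditivity along $\pi_1$, whose fibres are plane curves of positive genus, transfers the conclusion to $\Fgn{11}{9}$; together with the upper bound $\kappa(\Fgn{11}{9})\leq 19$ coming from the Calabi--Yau fibration $\Fgn{11}{9}\to\Fg{11}$, this should give $\kappa(\Fgn{11}{9})=0$. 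The lower bound $\kappa\geq 0$ is supplied by the effective canonical representative produced from the construction used in Theorem~\ref{theorem1}.

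The hard part will be the birationality of $\pi_2$. While the $K3$ surface is essentially pinned down by Mukai's theorem for $g=11$, one has to verify that the reconstruction of the $1$-nodal curve from the normalized marked curve---the identification of a specific pair of marked points as the two preimages of a node, and the extension of the remaining $9$ markings to points of $S$---is rigid on a dense open set. A secondary technical point is the stack-theoretic care required to invoke Iitaka subadditivity on the singular moduli spaces in play, handled as usual by passing to a smooth projective birational model.
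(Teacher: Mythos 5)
There is a genuine gap: the correspondence you set up cannot determine $\kappa(\Fgn{11}{9})$. Your incidence variety $\mathcal{P}$ fibers over $\Fgn{11}{9}$ via $\pi_1$ with \emph{one-dimensional} fibers (the nodal discriminant curve in the net $|H\otimes\mathcal{I}_{x_1,\ldots,x_9}|$), and for such a fibration subadditivity and easy addition only give inequalities of the shape $\kappa(F)+\kappa(\Fgn{11}{9})\leq\kappa(\mathcal{P})\leq\kappa(F)+\dim\Fgn{11}{9}$; neither pins down the Kodaira dimension of the base from that of the total space. Worse, if $\pi_2$ really is birational then $\kappa(\mathcal{P})=\kappa(\Mbar{10}{11})$, and this is far from zero: as recalled in Section~\ref{Preliminaries}, $K_{\Mbar{10}{11}}\cong p^*K_{\Mbar{10}{10}}\otimes q^*\omega_{\pi}$ with $\omega_{\pi}$ big on the universal curve, so $\kappa(\Mbar{10}{10})=0$ forces $\kappa(\Mbar{10}{11})\geq 3g-2=28$. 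Feeding $\kappa(\mathcal{P})\geq 28$ into $C_{n,1}$ along $\pi_1$ yields at best $\kappa(\Fgn{11}{9})\leq 27$, which combined with the a priori bound $\leq 19$ proves nothing; and your proposed lower bound $\kappa\geq 0$ is unsupported, since the effective representative of $K_{\Mbar{10}{10}}$ from Theorem~\ref{theorem1} lives on the wrong space until a birational identification is in place.

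The structural error is in the choice of markings: you mark nine points of $S$ \emph{away from} the node, which is what produces the extra marked point on the normalization ($9+2=11$) and hence the positive-dimensional fibers of $\pi_1$. The correct correspondence, used in the paper via \cite[Thm.~0.4]{B} (Theorem~\ref{F11,9}), takes the node itself to be one of the nine marked points of $S$; the normalization then carries $8+2=10$ markings with the two node preimages unordered, giving a birational isomorphism $\Mbar{10}{10}\big/S_2\dashrightarrow\Fgn{11}{9}$ with no fiber dimension to lose. What remains is to show $\Kod\left(\Mbar{10}{10}\big/S_2\right)=0$, which the paper gets by sandwiching between finite quotients: $0=\Kod\left(\Mbar{10}{10}\right)\geq\Kod\left(\Mbar{10}{10}\big/S_2\right)\geq\Kod\left(\Mbar{10}{10}\big/S_{10}\right)=0$, the first equality being Theorem~\ref{theorem1} and the last being \cite[Thm.~0.1]{FV1} together with \cite[Thm.~1.2]{BFV}, the quotient singularities being known not to impose adjunction conditions. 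Your proposal never engages with the symmetric quotient, which is where the actual content of the reduction lies.
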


Finally, using the relation between symmetric quotients of $\Mbar{10}{10}$ and $\Fgn{11}{n}$ established in \cite[Thm. 0.4]{B}, it follows from Theorem \ref{theorem1}, Theorem \ref{theorem2} and
$$\Kod(\Mbar{10}{10}\big/S_{10})=0 $$ 
\cite[Thm 0.1]{FV1} and \cite[Thm 1.2]{BFV}, that all intermediate quotients are of Calabi-Yau type.

\begin{theorem}
\label{theorem3}
For any subgroup $G\leq S_{10}$, the moduli space $\Mbar{10}{10}\big/G$ has Kodaira dimension zero. Similarly, for any subgroup $H\leq S_8$ that acts by fixing one of the markings, the moduli space ${\mathcal{F}}_{11,9}\big/H$ has Kodaira dimension zero.
\end{theorem}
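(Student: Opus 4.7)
The plan is to deduce both halves of Theorem~\ref{theorem3} from the already-established extremal cases via the monotonicity of Kodaira dimension under finite quotients, which pinches every intermediate quotient between two spaces of Kodaira dimension zero.

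The one ingredient I would record first is the standard monotonicity: for a finite surjective morphism $f\colon Y \to X$ of normal projective varieties, the ramification identity $K_Y = f^{*}K_X + R$ with $R \geq 0$ yields an injection $f^{*}\colon H^0(X, mK_X) \hookrightarrow H^0(Y, mK_Y)$ for every $m \geq 1$, hence $\Kod(Y) \geq \Kod(X)$. All coarse moduli spaces appearing here carry at worst finite quotient singularities, which are klt, so the pluricanonical sheaves behave reflexively and this inequality may be applied directly to our towers of symmetric quotients.

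For the first statement, fix a subgroup $G \leq S_{10}$. The canonical factorisation of the quotient map gives a tower of finite surjective morphisms
\[
\Mbar{10}{10} \longrightarrow \Mbar{10}{10}/G \longrightarrow \Mbar{10}{10}/S_{10},
\]
and two applications of the monotonicity above produce
\[
\Kod\bigl(\Mbar{10}{10}\bigr) \geq \Kod\bigl(\Mbar{10}{10}/G\bigr) \geq \Kod\bigl(\Mbar{10}{10}/S_{10}\bigr).
\]
The left side vanishes by Theorem~\ref{theorem1}, while the right side vanishes by \cite[Thm 0.1]{FV1} together with \cite[Thm 1.2]{BFV}, forcing $\Kod(\Mbar{10}{10}/G) = 0$.

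For the $K3$ statement I would first transfer the vanishing across the correspondence of \cite[Thm 0.4]{B}, which relates a maximal symmetric quotient of $\Mbar{10}{10}$ to a maximal symmetric quotient of $\mathcal{F}_{11,9}$ (arising from normalising a nodal section of a pointed genus-$11$ $K3$ through the distinguished marking). Using this correspondence, $\Kod(\Mbar{10}{10}/S_{10}) = 0$ delivers $\Kod(\mathcal{F}_{11,9}/S_8) = 0$, where $S_8$ permutes the eight non-distinguished markings. The identical sandwich applied to the tower
\[
\mathcal{F}_{11,9} \longrightarrow \mathcal{F}_{11,9}/H \longrightarrow \mathcal{F}_{11,9}/S_8,
\]
together with Theorem~\ref{theorem2}, then yields $\Kod(\mathcal{F}_{11,9}/H) = 0$ for every $H \leq S_8$ fixing the distinguished marking. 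The main thing that genuinely needs checking is the precise input from \cite[Thm 0.4]{B}: one must verify that the birational correspondence really pairs $\Mbar{10}{10}/S_{10}$ with $\mathcal{F}_{11,9}/S_8$ (and not with some other quotient) so that the vanishing transfers to the correct maximal symmetric quotient on the $K3$ side. Once that identification is recorded, Theorem~\ref{theorem3} is a purely formal consequence of the monotonicity sandwich.
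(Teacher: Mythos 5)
Your proposal is correct and follows essentially the same route as the paper, which deduces Theorem~\ref{theorem3} by squeezing each intermediate quotient between $\Mbar{10}{10}$ (Kodaira dimension zero by Theorem~\ref{theorem1}) and $\Mbar{10}{10}\big/S_{10}$ (Kodaira dimension zero by \cite[Thm 0.1]{FV1} and \cite[Thm 1.2]{BFV}), using that quotient singularities impose no adjunction conditions so pluricanonical forms pull back along the finite quotient tower. The transfer to $\mathcal{F}_{11,9}\big/H$ via the $S_8$-equivariance of the birational map of \cite[Thm. 0.4]{B} is likewise exactly the paper's intended argument.
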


The strategy broadly follows \cite{FV1}. The canonical class $K_{\Mbar{10}{10}}$ can be expressed as an effective linear combination of Logan's divisor (see \S 2), the pullback of the closure of the $K3$ locus in $\overline{\mathcal{M}}_{10}$ (class computed in \cite{FP}), and boundary divisors. Each of these irreducible components is rigid and we show that the canonical divisor is rigid through the construction of covering curves of each component with negative intersection with the canonical divisor. The covering curves are constructed from Lefschetz pencils on blown-up $K3$ surfaces. 
Theorem \ref{theorem2} follows from the birational map
$$\Mbar{10}{10}\big/S_2\dashrightarrow\mathcal{F}_{11,9}$$ 
constructed in \cite{B}, where $S_2$ permutes the first two marked points.

We complete the paper with two further applications of the curves constructed above. We first obtain an extremal nef curve in $\overline{\mathcal{M}}_{10}$ which shows that $\mathbb{Q}^+\langle \overline{\mathcal{K}},\delta_1,\dots \delta_5\rangle$ forms a face of the pseudo-effective cone of divisors in $\overline{\mathcal{M}}_{10}$, where $ \overline{\mathcal{K}}$ is the closure of the $K3$ locus. 

Finally, the negative intersection of an effective divisor with the irreducible curves constructed in Section~\ref{Lefschetz} implies inclusion. Hence we obtain the following.

 \begin{restatable}{theorem}{theoremfour}
 \label{theorem4}
Any effective divisor $D$ in $\overline{\mathcal{M}}_{g}$ that is the closure of an effective divisor on $\Mg$ with slope $s(D)<6+(12-\delta)/(g+1)$ must contain the locus of curves that are the normalization of a $\delta$-nodal curve lying on a $K3$ surface of genus $g+\delta$.
\end{restatable}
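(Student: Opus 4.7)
The plan is to apply a covering-curve argument using the irreducible curves $B\subset\overline{\mathcal{M}}_g$ produced by the Lefschetz pencils of Section~\ref{Lefschetz}. Recall the construction: given a general primitively polarized $K3$ surface $(S,H)$ of genus $g+\delta$, one blows up $\delta$ general points $p_1,\dots,p_\delta$ to obtain $\pi\colon\widetilde S\to S$ with exceptional divisors $E_i$, and sets $L=\pi^*H-2E_1-\cdots-2E_\delta$. Adjunction gives $L^2=2g-2-2\delta$, $L\cdot K_{\widetilde S}=2\delta$, and $p_a(C)=g$ for $C\in|L|$; a general member is smooth and corresponds on $S$ to a $\delta$-nodal curve in $|H|$ with nodes at the $p_i$. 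A general pencil in $|L|$, after blowing up its base locus of size $L^2$, gives a Lefschetz fibration $f\colon\widehat S\to\mathbb{P}^1$ and an induced irreducible curve $B\subset\overline{\mathcal{M}}_g$.

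First I would compute the intersection numbers of $B$ with the tautological classes. Since all singular fibers of $f$ are irreducible with a single node, $B\cdot\delta_i=0$ for $i\geq 1$, and $B\cdot\delta_0$ counts the singular fibers. From $\chi_{\mathrm{top}}(\widehat S)=24+\delta+(2g-2-2\delta)=22+2g-\delta$ together with the Lefschetz-fibration identity $\chi_{\mathrm{top}}(\widehat S)=2(2-2g)+s$ (where $s$ is the number of nodal fibers) one obtains $B\cdot\delta_0=s=6g+18-\delta$. A Leray argument for $f$, using $\chi(\mathcal{O}_{\widehat S})=2$ and relative Serre duality, yields $B\cdot\lambda=g+1$. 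Consequently
\[
\frac{B\cdot\delta}{B\cdot\lambda}=\frac{6g+18-\delta}{g+1}=6+\frac{12-\delta}{g+1}.
\]

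Next I would apply this to an effective divisor $D$ on $\overline{\mathcal{M}}_g$ with no boundary components: write $D\equiv a\lambda-\sum_i b_i\delta_i$ with $a,b_i\geq 0$. The slope hypothesis together with $b_0\geq\min_i b_i$ gives $a/b_0\leq s(D)<6+(12-\delta)/(g+1)$, hence
\[
D\cdot B=a(g+1)-b_0(6g+18-\delta)<0,
\]
forcing $B\subset D$. To conclude, I would note that every normalization of a $\delta$-nodal curve on a $K3$ of genus $g+\delta$ lies on some $B$ of this form (take the containing $K3$ as $S$, the $\delta$ nodes as the $p_i$, and any pencil in $|L|$ through the given curve), so these curves sweep out the full locus, which must therefore be contained in $D$.

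The principal obstacle, already handled in Section~\ref{Lefschetz}, is establishing that $|L|$ has the expected dimension, that a general pencil in $|L|$ is genuinely Lefschetz with only irreducible nodal degenerations, and that the resulting $B$ is irreducible. Granted this, the slope computation and the negative-intersection argument above complete the proof.
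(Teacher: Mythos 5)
Your proposal is correct and follows essentially the same route as the paper: it uses the nodal Lefschetz pencil on the blown-up $K3$ as a covering curve of the locus $\mathcal{K}_{g,\delta}$, computes the ratio $B\cdot\delta/B\cdot\lambda=6+(12-\delta)/(g+1)$, and concludes containment from the negative intersection. The only cosmetic difference is that you work with the pencil directly rather than pushing forward the $2^{\delta}$-fold base-changed family $\Gamma(g+\delta,\delta,0)$ of Section~\ref{Lefschetz}, which rescales all intersection numbers by $2^{\delta}$ and leaves the slope unchanged.
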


 In Section~\ref{Preliminaries} we recall the definition and class of Logan's divisor and the $K3$ divisor in $\overline{\mathcal{M}}_{10}$ in terms of the standard generators of the Picard group of $\Mbar{g}{n}$. In Section~\ref{Lefschetz} we construct nodal Lefschetz pencils on blown-up $K3$ surfaces and compute the numerical invariants. In Section~\ref{KodM1010} we prove Theorem \ref{theorem1} and in Section~\ref{SymQuot} we prove Theorem \ref{theorem2} and  show Theorem \ref{theorem3} follows. In Section~\ref{FurtherApplications} we discuss the further applications of the curves constructed in any genus in Section~\ref{Lefschetz}, including the construction of a new extremal nef curve in $\overline{\mathcal{M}}_{10}$ and the associated face of the psuedo-effective cone and we prove Theorem~\ref{theorem4}.

\section*{Acknowledgements}
We would like to thank Gavril Farkas, Rahul Pandharipande, Ana-Maria Castravet and Paul Hacking for several stimulating discussions regarding this paper. We would
also like to thank the anonymous referees for their useful comments
and corrections. The second author was supported by the Alexander von Humboldt Foundation during the preparation of this article.

\section{Preliminaries}
\label{Preliminaries}
Let $g\geq 2$ be the genus. The relative dualizing sheaf $\omega_\pi$ of the universal curve 
$$\pi:\Mbar{g}{1}\to\Mgbar$$ 
is big, cf. \cite[\S 3]{CHM}, and the relative dualizing sheaf of $p:\Mbar{g}{n+1}\to\Mbar{g}{n}$ is the pullback of $\omega_\pi$ by the forgetful map $q:\Mbar{g}{n+1}\to\Mbar{g}{1}$. Since the relative dualizing sheaf $\omega_p$ and the sheaf of relative one forms $\Omega_p$ differ in codimension two, 
$$K_{\Mbar{g}{n+1}}\cong p^*K_{\Mbar{g}{n}}\otimes q^*\omega_\pi.$$
Hence when a multiple of $K_{\Mbar{g}{n}}$ is effective, that is, $\Kod(\Mbar{g}{n})\geq 0$, we have
$$\Kod\left(\Mbar{g}{n+1}\right)\geq 3g-2.$$
This explains the sudden growth of $\Kod(\Mbar{g}{n})$ when $n$ increases and the salience of moduli spaces $\Mbar{g}{n}$ of intermediate type.\\ 

As discussed in the introduction, we first study the $S_2$-quotient of $\Mbar{10}{10}$. It was established in \cite[Thm. 0.4]{B} that there is a birational isomorphism
$$\Mbar{10}{10}\big/S_2\dashrightarrow\mathcal{F}_{11,9}$$
sending a general point in the source $[C,p_1+p_2,p_3,\ldots,p_{10}]$ to the unique pointed polarized K3 surface $(S,H,x_1,p_3,\ldots,p_{10})$ such that, after identifying the two first unordered points, the genus eleven nodal curve $C\big/p_1\sim p_2$ lies in the linear system $|H|$ and passes through $p_3,\ldots,p_{10}\in S$. The forgetful map 
$$\mathcal{F}_{11,9}\to\mathcal{F}_{11}$$
is fibered in Calabi-Yau varieties and hence by the Easy Addition Formula,
$$\Kod\left(\Mbar{10}{10}\big/S_2\right)\leq 19.$$
On the other hand, by \cite[Thm. 0.1]{FV1} and \cite[Thm. 1.2]{BFV}  
$$\Kod\left(\Mbar{10}{10}\big/S_{10}\right)=0.$$
It follows that all intermediate symmetric quotients are of intermediate type. It is worth mentioning that $\Mbar{g}{n}\big/S_n$ has finite quotient singularities and they do not impose adjunction conditions, i.e., pluricanonical forms on the regular locus extend to any desingularization. See \cite[Thm. 1.1]{FV1} and \cite[Thm. 1.4]{BFV}.   \\

Recall that $\Pic_{\mathbb{Q}}(\Mbar{g}{n})$ is generated by $\lambda$, the first Chern class of the Hodge bundle, $\psi_i$ the first Chern class of the cotangent bundle on $\Mbar{g}{n}$ associated with the $i$th marking for $i=1,\dots,n$ and the classes of the irreducible components of the boundary. We denote by $\delta_{\rm{irr}}$ the class of the locus of curves with a non-separating node and $\delta_{i:S}$ for $0\leq i\leq g$, $S\subset\{1,\dots,n\}$ the class of the locus of curves with a separating node that separates the curve such that one component has genus $i$ and contains precisely the markings of $S$. Hence $\delta_{i:S}=\delta_{g-i:S^c}$ and we require $|S|\geq 2$ for $i=0$ and $|S|\leq n-2$ for $i=g$. For $g\geq 3$ these divisors freely generate $\Pic_{\mathbb{Q}}(\Mbar{g}{n})$.

The canonical class of $\Mgnbar$ can be computed using Grothendieck-Riemann-Roch and it is given by  the formula 
\begin{equation}
\label{canMgn}
K_{\Mbar{g}{n}}=13\lambda+\sum_{i=1}^n\psi_i -2\delta_{\rm{irr}}-2\sum_{\substack{S\subset\{1,\ldots,n\}\\ i\geq 0}}\delta_{i:S}-\delta_{1:\varnothing}.
\end{equation}

Following \cite{FV1}, there are two divisors that play a central role in showing that $K_{\Mbar{10}{10}\big/S_2}$ is effective and rigid. The first is the divisor $D_g\subset\M{g}{g}$ consisting of pointed curves $[C,p_1,\ldots,p_g]$ such that $p_1+\ldots+p_g\in\rm{Div}(C)$ moves on a pencil, that is, 
$$D_g:=\left\{\left[C,p_1,\ldots,p_g\right]\in\M{g}{g}\mid h^0(C,\mathcal{O}_C(p_1+\ldots+p_g))\geq2\right\}.$$ 
The class of the closure $\overline{D}_g\subset\Mbar{g}{g}$ was computed in \cite[Thm. 5.4]{L}: 
\begin{equation}
\label{Logan}
\overline{D}_g=-\lambda+\psi-\sum_{\substack{S\subset\{1,\ldots,g\}\\ 0\leq i\leq \lfloor g/2\rfloor}}\binom{||S|-i|+1}{2}\delta_{i:S}\in{\rm{Pic}}_{\qq}\left(\Mbar{g}{g}\right).
\end{equation}
The second divisor that plays a key role is the closure of the locus of curves lying on a K3 surface;
$$\mathcal{K}:=\left\{[C]\in\mathcal{M}_{10}\mid \hbox{ there exists } (S,H)\in\mathcal{F}_{10}\hbox{ with }C\in|H|\right\}.$$
It was showed in \cite[Thm. 1.6]{FP} that $\overline{\mathcal{K}}\subset\overline{\mathcal{M}}_{10}$ is an irreducible divisor of class
\begin{equation}
\label{K3divisor}
\overline{\mathcal{K}}=7\lambda-\delta_{\rm{irr}}-5\delta_1-9\delta_2-12\delta_3-14\delta_4-B\delta_5\in{\rm{Pic}}_{\qq}\left(\overline{\mathcal{M}}_{10}\right),
\end{equation}  
where $B\geq 6$. If we call $\pi:\Mbar{10}{10}\to\overline{\mathcal{M}}_{10}$, from (\ref{canMgn}), (\ref{Logan}), and (\ref{K3divisor}), we have the following effective decomposition of the canonical class;
\begin{equation}
\label{canM1010}
K_{\Mbar{10}{10}}=\overline{D}_{10}+2\pi^*\overline{\mathcal{K}}+\sum_{i,S}c_{i:S}\delta_{i:S},
\end{equation}
where $c_{i:S}>0$ for all $i$ and $S$.\\

In the next section we construct several irreducible curves in $\Mbar{g}{n}$ that for $(g,n)=(10,10)$ cover all irreducible components of the effective decomposition (\ref{canM1010}). Negative intersection of such a curve with an effective divisor shows the divisor lies in the base locus of the linear system. Through the repeated application of such curves we show the effective divisors $nK_{\Mbar{10}{10}}$ to be rigid for all $n\geq1$.

\section{Numerical invariants for a generalized Lefschetz pencil}\label{Lefschetz}

Let $(S,H, x_1,\ldots,x_\delta,y_1,\ldots,y_l)\in \mathcal{F}_{g,\delta+l}$ be a polarized K3 of genus $g$ with $\delta+l$ marked points and
$$\eps:\tilde{S}\to S$$
the blow-up of S at $x_1,\ldots,x_\delta,y_1,\ldots,y_l$. We fix 
$$E=E_1+\ldots+E_\delta,\hspace{0.2cm}\hbox{and}\hspace{0.2cm}F=F_1+\ldots+F_l,$$
the sum of exceptional divisors over the points $x_i$ and $y_j$ respectively. Assume
$$h^0\left(\tilde{S},\mathcal{O}_S\left(\eps^*H-2E-F\right)\right)\geq2$$
and let 
$$\pp^1\subset|\eps^*H-2E-F|$$
be a general pencil in the linear system with smooth general element. The pencil $\pp^1$ induces a family of curves of genus $g-\delta$ in $\tilde{S}$. Notice that the base locus of the pencil consists of 
$$(\eps^*H-2E-F)^2=2(g-\delta)-2-2\delta-l$$
points. We resolve the family by blowing up at the base locus of the pencil
\begin{displaymath}
\begin{tikzcd}
{\sh{U}}\ar[r]\ar[dr, "\phi"']&\tilde{S}\ar[dashed, d]\\
&\pp^1.
\end{tikzcd}
\end{displaymath}
Abusing notation we call $E_i\subset\sh{U}$ and $F_i\subset\sh{U}$ the proper transforms of the exceptional divisors in $\tilde{S}$. We want to construct a curve on 
$\overline{\mathcal{M}}_{g-\delta,2\delta+l}$. Notice that 
$$\phi\mid_{F_i}:F_i\to\pp^1$$ 
is an isomorphism and 
$$\phi\mid_{E_i}:E_i\to\pp^1$$
is a degree two morphism ramified over two points. The family $\phi$ comes with $l$ sections $\delta$ many $2$-sections. For any $2$-section, the induced map $E_i\to\pp^1$ is ramified at two points and choosing the pointed $K3$ and the pencil general, we can assume that the ramification points are all different. In order to separate the sections we perform the degree $2$ base change  

\begin{displaymath}
\begin{tikzcd}
\sh{U}\times E_1\ar[d, "p_1"]\ar[r, "f_1"]&\sh{U}\ar[d ,"\phi"]\\
E_1\ar[r]&\pp^1
\end{tikzcd}
\end{displaymath}
where the map at the bottom is $\phi$ restricted to $E_1$. Notice that $f_1$ is a two-to-one cover ramified at two distinct fibers $C_{11}$ and $C_{12}$ of $\sh{U}\times E_1\to E_1$, and $E_1\times E_1$ consist of two copies of $E_1$, one given by the diagonal and the one by the locus $(x,\bar{x})$, where $\bar{x}$ is the conjugate of $x$ for the map $E_1\to \pp^1$. Thus,
$$f_1^*E_1=E_{11}+E_{12},$$ 
and the two components intersect in exactly two points. Notice that the induced map 
$$p_1\mid_{E_1\times E_2}:E_1\times E_2\to E_1$$ 
is again a $2$-to-$1$ cover ramified over two points. We continue this process $\delta$-times to obtain a family with $2\delta+\ell$ sections; $E_{11}, E_{12}, \ldots,E_{\delta1},E_{\delta2}, F_1,\ldots,F_{\ell}$. For $i=1,\ldots,\delta$, the section $E_{i1}$ intersects $E_{i2}$ in two points. In order to separate them we blow up the corresponding $2\delta$ points, $\tau:\sh{X}\to\sh{U}_{\delta}$. The resulting family 
$$\pi:\sh{X}\to \Gamma(g,\delta,l), \hspace{0.2cm} s_i:\Gamma\to\sh{X}, i=1,\ldots,2\delta+l$$
is the blow-up of a $2^\delta$-to-$1$ cover of $\sh{U}$ and it has stable fibers, inducing a family on $\Mbar{g-\delta}{2\delta+l}$. We will assume that the sections are ordered so that the first $2\delta$ sections correspond to the proper transforms $E_{11},E_{12},\ldots,E_{\delta1},E_{\delta2}$ and the last $\ell$ sections correspond to the proper transforms $F_i$'s.

\begin{displaymath}
\begin{tikzcd}
{\sh{X}}\ar[r,"\tau"]\ar[dr, "\pi"']&\sh{U}_\delta\ar[r, "f"]\ar[d]&\sh{U}\ar[d, "\phi"]\\
&\Gamma:=E_1\times\ldots\times E_\delta\ar[r]&\pp^1.
\end{tikzcd}
\end{displaymath}

\begin{proposition}
\label{intersections}
We have the following intersection numbers of the curve $\Gamma(g,\delta,l)$ with the standard divisors on $\Mbar{g-\delta}{2\delta+\ell}$:
$$
\begin{array}{lcl}
\Gamma\cdot\lambda=2^{\delta}(g-\delta+1),&&\Gamma\cdot\psi_i=2^{\delta}, \hbox{for }i=2\delta+1,\ldots,2\delta+\ell,\\
\Gamma\cdot\delta_{\rm{irr}}=2^{\delta}(18+6g-7\delta),&&\Gamma\cdot \delta_{0:\{1,2\}}=\Gamma\cdot\delta_{0:\{3,4\}}=\ldots=\Gamma\cdot\delta_{0:\{2\delta-1,2\delta\}}=2,\\
\Gamma\cdot\psi_{i}=2^{\delta-1}+4, \hbox{for }i=1,\ldots,2\delta,&&\Gamma\cdot\delta_{i:S}=0\text{ otherwise.} 
\end{array}
$$
\end{proposition}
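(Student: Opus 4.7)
The plan is to compute each intersection number by first handling the pencil $\phi: \mathcal{U} \to \mathbb{P}^1$ directly, then tracking the scaling under the degree-$2^\delta$ base change $q: \mathcal{U}_\delta \to \mathcal{U}$, and finally adjusting for the blowup $\tau: \mathcal{X} \to \mathcal{U}_\delta$ at the intersection points of the split sections. For the pencil itself, I would write the relative dualizing sheaf as $\omega_\phi = K_\mathcal{U} + 2f$ with $f$ a fiber, compute $K_\mathcal{U}^2$ from the blowup $\mathcal{U} \to \tilde{S}$ at the $L^2 = 2(g-\delta)-2-2\delta-l$ base points of $L = \epsilon^* H - 2E - F$ via $K_\mathcal{U}^2 = K_{\tilde{S}}^2 - L^2 = 3\delta - 2g + 2$, and use adjunction to get $K_\mathcal{U} \cdot f = 2(g-\delta)-2$, yielding $\omega_\phi^2 = 6g - 5\delta - 6$. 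The topological Euler characteristic $\chi_{\mathrm{top}}(\mathcal{U}) = \chi_{\mathrm{top}}(\tilde{S}) + L^2 = 22 + 2g - 3\delta$ together with the Lefschetz-pencil node count formula gives $\delta_{\mathrm{irr}} \cdot \mathbb{P}^1 = 18 + 6g - 7\delta$, and Mumford's relation $12\lambda = \omega_\phi^2 + \delta_{\mathrm{irr}}$ then produces $\lambda \cdot \mathbb{P}^1 = g - \delta + 1$. Since each $F_j$ misses the pencil base locus, it has $F_j^2 = -1$ in $\mathcal{U}$, so $\psi_{F_j} \cdot \mathbb{P}^1 = 1$.

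For a sufficiently generic choice of $(S, H, x_\bullet, y_\bullet)$ and pencil, the branch locus of $\Gamma \to \mathbb{P}^1$ is disjoint from the nodal fibers of $\phi$ and from the $F_j$ sections, so these invariants all multiply by $2^\delta$, yielding the claimed $\Gamma \cdot \lambda$ and $\Gamma \cdot \delta_{\mathrm{irr}}$, and $\Gamma \cdot \psi_{F_j} = -2^\delta F_j^2 = 2^\delta$. The 2-sections $E_i$ pull back as $q^* E_i = E_{i,1} + E_{i,2}$ with $(q^* E_i)^2 = 2^\delta E_i^2 = -2^\delta$; the two components meet transversally at the two points lying above the branch locus of $E_i \to \mathbb{P}^1$, and the involution symmetry $E_{i,1}^2 = E_{i,2}^2$ combined with $(E_{i,1}+E_{i,2})^2 = -2^\delta$ gives $E_{i,j}^2 = -2^{\delta-1}-2$ in $\mathcal{U}_\delta$. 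The blowup $\tau$ at these $2\delta$ points separates the sections and creates, on each affected fiber, a rational tail carrying the markings $\{2i-1, 2i\}$, contributing $\Gamma \cdot \delta_{0:\{2i-1, 2i\}} = 2$. Each blowup diminishes the self-intersection of the proper transform of $E_{i,j}$ by $1$, so the section in $\mathcal{X}$ has self-intersection $-2^{\delta-1}-4$, whence $\Gamma \cdot \psi_{E_{i,j}} = 2^{\delta-1}+4$. All remaining $\delta_{i:S}$ intersect $\Gamma$ trivially, because the fibers of $\pi: \mathcal{X} \to \Gamma$ have no singularities beyond the Lefschetz-type irreducible nodes and the newly introduced rational tails.

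The main technical obstacle lies in justifying that the iterated base change really leaves $E_{i,1}\cdot E_{i,2}$ equal to $2$ in $\mathcal{U}_\delta$, rather than accumulating extra factors from ramification introduced at successive stages. This amounts to arranging, by genericity of the pointed K3 and the pencil, that the branch loci of the successive double covers $E_i \to \mathbb{P}^1$ and the loci of base-change ramification can be kept disjoint from the pre-existing intersections of the split sections, so that the iterated fiber product behaves étalely at each of these critical points; the entire computation of $\Gamma \cdot \psi_{E_{i,j}}$ and $\Gamma \cdot \delta_{0:\{2i-1, 2i\}}$ depends on this transversality verification.
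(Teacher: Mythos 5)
Your computations of $\Gamma\cdot\lambda$, $\Gamma\cdot\delta_{\mathrm{irr}}$ and $\Gamma\cdot\psi_i$ for the sections coming from the $F_j$ are correct, and they amount to a reorganization of the paper's argument: the paper computes $c_1^2$ and $c_2$ of the final surface $\mathcal{X}$ and applies Noether's formula there, together with $\deg(\lambda_\pi)=\chi(\mathcal{O}_{\mathcal{X}})-\chi(\mathcal{O}_\Gamma)\chi(\mathcal{O}_C)$ and the topological Euler characteristic comparison, whereas you compute the invariants of the pencil $\mathcal{U}\to\mathbb{P}^1$ first and then scale by the degree $2^\delta$ of the base change, using compatibility of the Hodge bundle with base change and the genericity statement that the branch fibers avoid the nodal fibers. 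The two routes yield the same numbers for these entries.

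The problem is exactly the step you single out as the main technical obstacle, and it cannot be repaired by the transversality argument you propose. The two components $E_{i1},E_{i2}$ of $f^*E_i\subset\mathcal{U}_\delta$ meet precisely over the locus of $\Gamma=E_1\times_{\mathbb{P}^1}\cdots\times_{\mathbb{P}^1}E_\delta$ lying above the two ramification points of $E_i\to\mathbb{P}^1$. The projection $\Gamma\to E_i$ has degree $2^{\delta-1}$ and, for generic choices, is unramified over those two points, so this locus consists of $2\cdot 2^{\delta-1}=2^{\delta}$ points, at each of which the two components cross transversally. Genericity separates the branch points of the various $E_j\to\mathbb{P}^1$ from one another, but that is what guarantees this count rather than reducing it: the later base changes multiply the number of preimages of the two original crossing points, and no choice of pencil avoids this. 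Hence $E_{i1}\cdot E_{i2}=2^{\delta}$, not $2$; combining with $(E_{i1}+E_{i2})^2=-2^{\delta}$ one gets $E_{ij}^2=-3\cdot 2^{\delta-1}$, separating the sections requires blowing up $2^{\delta}$ points on each, and the resulting values are $\Gamma\cdot\psi_i=5\cdot 2^{\delta-1}$ and $\Gamma\cdot\delta_{0:\{2i-1,2i\}}=2^{\delta}$ for $\delta\ge 2$, rather than $2^{\delta-1}+4$ and $2$. You have faithfully reproduced the paper's own gap here: its proof likewise asserts $E_{i1}\cdot E_{i2}=2$ with no further justification, after claiming (incorrectly for generic data) that each successive cover $E_1\times E_j\to E_1$ is ramified over only two points. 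For $\delta\le 1$, which covers every curve $\Gamma(g,\delta,l)$ actually used in the proofs of Theorems 1--3, one has $2^{\delta}=2$ and all the stated numbers agree with the correct ones; and the entries $\lambda$ and $\delta_{\mathrm{irr}}$ used for Theorem 4 are unaffected.
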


\begin{proof}
The map $f:\sh{U}_{\delta}\to\sh{U}$ is ramified at $(2^{\delta+1}-2)$ fibers. Thus,  
\begin{equation}
\label{intersections.eq1}
K_{\sh{X}}=(\tau\circ f)^*K_{\sh{U}}+(2^{\delta+1}-2)C+G,
\end{equation}
where $C$ is the class of a fiber of $\pi$ and $G=G_1+\ldots+G_{2\delta}$ is the exceptional divisor of the blow-up $\tau:\sh{X}\to\sh{U}_{\delta}$. Using (\ref{intersections.eq1}) observe
\begin{equation}
\begin{array}{c}
c_1(\sh{X})^2=(6\cdot2^{\delta}-8)g+(6-5\cdot2^{\delta})\delta+8-6\cdot2^{\delta},\\
c_2(\sh{X})=(6\cdot2^\delta-4)g+(6-7\cdot2^\delta)\delta+18\cdot2^\delta+4
\end{array}
\end{equation}
To compute $\Gamma\cdot\lambda$ we use Noether's formula together with the formula 
$$\deg(\lambda_\pi)=\chi(\sh{X},\mathcal{O}_{\sh{X}})-\chi(\Gamma,\mathcal{O}_\Gamma)\cdot\chi(C,\mathcal{O}_C),$$
where $C$ is a general fiber of $\pi$, cf. \cite[Prop. 3.1]{CU}. To compute $\Gamma\cdot\delta_{\rm{irr}}$ we compare the topological Euler characteristic $\chi_{\rm{top}}(\pp^1)\cdot\chi_{\rm{top}}(C)$ of the product $\pp^1\times C$ with the topological Euler characteristic of the surface $\sh{X}$. To compute the intersection with the $\psi$ classes, observe $(\tau^*(E_{i1}+E_{i2}))^2=-2^{\delta}$ and $E_{i1}\cdot E_{i2}=2$, giving the self intersection of the proper transforms on $\sh{X}$. The argument is the same for $F_i$. The remaining zero intersections are clear from the construction. 
\end{proof}

\section{The Kodaira dimension of $\Mbar{10}{10}$ is zero}\label{KodM1010}

It was proved in \cite[Thm. 0.4]{B} that for $\delta$ and $\ell$ in certain range, some symmetric quotients of $\Mbar{11-\delta}{2\delta+\ell}$ are birational to $\Fgn{11}{\delta+\ell}$. In particular for $\delta=1$ and $\ell=8$, the result is the following:
\begin{theorem}[Thm. 0.4 in \cite{B}]
\label{F11,9}
There exists a birational map
$$\Mbar{10}{10}\big/S_2\dashrightarrow\Fgn{11}{9}$$
sendings a general pointed curve $[C,x_1+x_2,x_3,\ldots,x_{10}]\in\Mbar{10}{10}\big/S_2$ to the unique pointed polarized K3 surface $(S,H,y,x_3,\ldots,x_{10})\in \Fgn{11}{9}$, where the nodal curve $C\big/x_1\sim x_2$ obtained by identifying $x_1=x_2=y$ lies in the linear system $|H|$. 

\end{theorem}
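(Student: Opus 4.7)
The plan is to construct an explicit rational inverse $\Psi:\Fgn{11}{9}\dashrightarrow\Mbar{10}{10}\big/S_2$ to the map $\Phi$ described in the statement and check that both compositions are generically the identity. The dimensions match on the nose:
$$\dim\Mbar{10}{10}\big/S_2=3\cdot 10-3+10=37=19+2\cdot 9=\dim\Fgn{11}{9}.$$
To define $\Psi$, I would take a general $(S,H,y,x_3,\ldots,x_{10})\in\Fgn{11}{9}$ and look inside $|H|\cong\pp^{11}$: requiring a curve to be singular at $y$ imposes $3$ linear conditions (vanishing of the equation and of its two partial derivatives at $y$ in local coordinates on $S$), while passing through the eight remaining marked points imposes $8$ more, so generically these cut out a single point of $|H|$. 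I would then verify that the resulting curve $C'$ is irreducible with a unique node at $y$ and smooth elsewhere, using that on a general K3 of Picard rank one every member of $|H|$ is irreducible, together with a standard transversality argument for the Severi stratum of $1$-nodal curves. Normalizing $C'$ at its node yields a smooth genus $10$ curve $C$ equipped with an unordered pair $\{x_1,x_2\}$ lying over $y$, and pulling back $x_3,\ldots,x_{10}$ canonically gives the desired point of $\Mbar{10}{10}\big/S_2$.

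Verifying that $\Phi\circ\Psi$ and $\Psi\circ\Phi$ are generically the identity is then essentially tautological, since $\Psi$ is designed to reverse the construction of $\Phi$: starting from a general pointed curve $[C,x_1+x_2,x_3,\ldots,x_{10}]$, the glued nodal curve $C':=C/(x_1{\sim}x_2)$ sits in $|H|$ on the K3 produced by $\Phi$, and by generality it is the unique element of $|H|$ with a node at $y$ passing through $x_3,\ldots,x_{10}$, so normalization returns the original point; the reverse composition is analogous.

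The main obstacle, and only non-formal input, is the well-definedness of $\Phi$: one must show that a general $1$-nodal curve of arithmetic genus $11$, together with eight smooth marked points, lies on an essentially unique polarized K3 surface. I would reduce this to Mukai's theorem that the forgetful map from the universal K3 linear system $\mathcal{P}_{11}\to\mathcal{M}_{11}$ is birational, applied to the codimension one Severi locus $V_1\subset\mathcal{P}_{11}$ of $1$-nodal curves of arithmetic genus $11$. One has
$$\dim V_1=19+11-1=29=3\cdot 10-3+2=\dim(\mathcal{M}_{10,2}\big/S_2),$$
and a deformation-theoretic argument at a general nodal curve on a general K3 (using smoothness of the Severi variety of the expected dimension at such a point, together with the fact that first-order deformations in the K3 direction surject onto deformations of the pointed normalization) upgrades Mukai's birationality to birationality of the normalization map $V_1\dashrightarrow\mathcal{M}_{10,2}\big/S_2$. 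Enriching both sides by the remaining $8$ free markings on $S$, respectively on $C\setminus\{x_1,x_2\}$, then yields the stated birational equivalence.
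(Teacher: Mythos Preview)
This theorem is not proved in the present paper: it is quoted verbatim from \cite{B} (as the label ``Thm.\ 0.4 in \cite{B}'' indicates) and used as a black box. There is therefore no ``paper's own proof'' to compare your proposal against.

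That said, your outline is broadly the correct strategy and is in the spirit of how the result is established in \cite{B}. Your dimension counts are right, the construction of the inverse $\Psi$ via the $11$ linear conditions on $|H|\cong\pp^{11}$ is the natural one, and you have correctly identified the only substantive point: upgrading Mukai's birationality $\mathcal{P}_{11}\dashrightarrow\mathcal{M}_{11}$ to the statement that the normalization map from the $1$-nodal Severi stratum $V_1$ to $\mathcal{M}_{10,2}/S_2$ is birational. Your sketch of this step (smoothness of the Severi variety at a general point plus surjectivity of the Kodaira--Spencer map onto deformations of the pointed normalization) is the right shape, though in an actual proof you would need to justify that a \emph{general} point of $\mathcal{M}_{10,2}/S_2$ is hit, not merely that the map is generically finite; this is where one really uses Mukai together with a specialization/openness argument, and it is the part of your proposal that remains a genuine sketch rather than a proof.
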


As a corollary we have the following lemma.

\begin{lemma}
\label{lemma1}
The curve $\Gamma(11,1,8)$ is a covering curve for $\overline{D}_{10}$. Moreover,
$$\Gamma(11,1,8)\cdot \overline{D}_{10}=-2\hbox{ and }\Gamma(11,1,8)\cdot K_{\Mbar{10}{10}}=0.$$
\end{lemma}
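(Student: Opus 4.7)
The statement has three components: (a) each smooth fiber $C$ of the family $\Gamma := \Gamma(11,1,8)$ lies in $\overline{D}_{10}$; (b) as the K3 datum varies, the fibers of $\Gamma$ cover a dense subset of $\overline{D}_{10}$; and (c) the two intersection numbers follow from Proposition~\ref{intersections} combined with the explicit expressions for $\overline{D}_{10}$ and $K_{\Mbar{10}{10}}$.

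For (a), the ten marked points on a general fiber satisfy $p_1+\ldots+p_{10} = (E+F)|_C$, and I would show $h^0(C,\mathcal{O}_C(p_1+\ldots+p_{10}))\geq 2$ by a short adjunction argument on $\tilde S$. Since $K_{\tilde S}=E+F$, adjunction gives $K_C = (E+F+C)|_C$, so $K_C - (E+F)|_C \cong N_{C/\tilde S}$. Serre duality and Riemann--Roch on $C$ then yield $h^0((E+F)|_C) = 1 + h^0(N_{C/\tilde S})$, and the short exact sequence $0\to\mathcal{O}_{\tilde S}\to\mathcal{O}_{\tilde S}(C)\to N_{C/\tilde S}\to 0$ together with $h^1(\mathcal{O}_{\tilde S})=0$ gives $h^0(N_{C/\tilde S}) = h^0(\mathcal{O}_{\tilde S}(\eps^*H-2E-F))-1\geq 1$ under the pencil hypothesis. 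Hence $[C,p_1,\ldots,p_{10}]\in D_{10}$.

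For (b), I would invert the construction via Theorem~\ref{F11,9}. Given a general $[C,p_1,\ldots,p_{10}]\in D_{10}$, the nodal curve $C/p_1\sim p_2$ of arithmetic genus $11$ embeds in a polarized K3 $(S,H)$; the node becomes a point $y\in S$ and $p_3,\ldots,p_{10}$ become eight additional points on $S$. Since $h^0((E+F)|_C)\geq 2$ is equivalent to $h^0(\mathcal{O}_{\tilde S}(\eps^*H-2E-F))\geq 2$ by the computation in (a), this K3 datum lies in the exceptional locus $Z\subset\Fgn{11}{9}$ of the birational map, and $[C,p_1,\ldots,p_{10}]$ is recovered as a fiber of the corresponding $\Gamma$. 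As the K3 datum varies, the $\Gamma$'s therefore sweep out a dense subset of $\overline{D}_{10}/S_2$, and pulling back along the $S_2$-quotient covers $\overline{D}_{10}$.

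For (c), Proposition~\ref{intersections} at $(g,\delta,l)=(11,1,8)$ gives $\Gamma\cdot\lambda=22$, $\Gamma\cdot\psi_1=\Gamma\cdot\psi_2=5$, $\Gamma\cdot\psi_i=2$ for $i=3,\ldots,10$, $\Gamma\cdot\delta_{\rm{irr}}=154$, $\Gamma\cdot\delta_{0:\{1,2\}}=2$, and all other boundary intersections zero. Substitution into Logan's formula~(\ref{Logan}) gives $\Gamma\cdot\overline{D}_{10}=-22+26-\binom{3}{2}\cdot 2=-2$, where the binomial comes from $||S|-i|+1 = |2-0|+1 = 3$ for $\delta_{0:\{1,2\}}$; and substitution into the canonical class formula~(\ref{canMgn}) gives $\Gamma\cdot K_{\Mbar{10}{10}}=13\cdot 22 + 26 - 2\cdot 154 - 2\cdot 2 = 0$. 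The main obstacle is step (b): the reverse construction relies crucially on Theorem~\ref{F11,9} to ensure that the nodal genus-$11$ curve determines a K3 datum uniquely and that the resulting parametrization is generically dominant onto $\overline{D}_{10}/S_2$; the intersection calculations themselves are mechanical once Proposition~\ref{intersections} is invoked.
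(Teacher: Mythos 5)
Your proposal is correct and follows essentially the same route as the paper: the covering property is obtained by running the construction backwards from a general point of $\overline{D}_{10}$ (Mukai/Theorem~\ref{F11,9} supplies the K3, and the Logan condition $h^0(\mathcal{O}_C(p_1+\cdots+p_{10}))\geq 2$ translates via the ideal-sheaf/adjunction sequence on $\tilde S$ into the existence of the pencil in $|\eps^*H-2E-F|$), and the intersection numbers are read off from Proposition~\ref{intersections} exactly as you compute them. Your part (a), checking that the fibers of the family actually lie in $D_{10}$, is the same adjunction computation run in the forward direction and is left implicit in the paper.
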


\begin{proof}
Let $[C,x_1,\ldots,x_{10}]\in\Mbar{10}{10}$ be a general point on $\overline{D}_{10}$. Since the curve $[C,x_1+x_2]\in\Mbar{10}{2}\big/S_2$ is general, there exists a blown-up K3 surface, $\eps:\tilde{S}\to S$, with exceptional divisor $E$, such that the curve $C$ is in the linear system $|\eps^*H-2E|$, with $(S,H)$ in $\mathcal{F}_{11}$. On the other hand, since $h^0(C,\mathcal{O}_{C}(x_1+\ldots+x_{10}))=2$, by the exact sequence
$$0\to \mathcal{O}_{\tilde{S}}\to\mathcal{I}_{x_1+\ldots+x_{10}/\tilde{S}}(C)\to \omega_{C}(-x_1-\ldots-x_{10})\to0,$$
there exists a pencil $\pp^1\cong\pp H^0(\tilde{S},\mathcal{I}_{x_1+\ldots+x_{10}/\tilde{S}}(C))\subset|\eps^*H-2E|$, such that the induced curve on $\Mbar{10}{10}$ is given by $\Gamma(11,1,8)$. The intersection numbers follow from Proposition \ref{intersections}.
\end{proof}

Let 
$$\sigma:\Mbar{10}{10}\longrightarrow\Mbar{10}{10}$$ 
permute the first and last markings, 
$$\varphi:\Mbar{10}{10}\longrightarrow\Mbar{10}{9}$$
forget the second marked point and 
$$\theta:\Mbar{10}{\{3,4,\ldots,10\}\cup\{r\}}\to\Mbar{10}{10}$$
be the boundary map that associates to any $\{r\}\cup\{3,4,\ldots,10\}$-pointed stable curve of genus $10$, the $10$-pointed curve obtained by gluing to it a $\{1,2,s\}$-pointed rational tail by identifying $r$ with $s$.

\begin{lemma}\label{lemma2}
The curve $\underline{\Theta}:=\theta_*\varphi_*\sigma_*\Gamma(11,1,8)$ is a covering curve for $\delta_{0:\{1,2\}}$ and
$$\underline{\Theta}\cdot\lambda=22,\hspace{1cm} \underline{\Theta}\cdot\delta_{\rm{irr}}=154,\hspace{1cm}\underline{\Theta}\cdot\delta_{0:\{1,2\}}=-2, \hspace{1cm} \underline{\Theta}\cdot\psi_{10}=3,$$
$$ \underline{\Theta}\cdot\psi_{i}=2 \text{ for $i=3,\dots,9$.}$$
\end{lemma}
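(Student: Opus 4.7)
The plan is to propagate the intersection numbers of $\Gamma:=\Gamma(11,1,8)$ through the three pushforwards $\sigma_*$, $\varphi_*$, $\theta_*$ by repeated application of the projection formula, starting from the data of Proposition~\ref{intersections} specialized to $(g,\delta,l)=(11,1,8)$: on $\Mbar{10}{10}$ one has $\Gamma\cdot\lambda=22$, $\Gamma\cdot\delta_{\rm{irr}}=154$, $\Gamma\cdot\psi_1=\Gamma\cdot\psi_2=5$, $\Gamma\cdot\psi_i=2$ for $i=3,\ldots,10$, $\Gamma\cdot\delta_{0:\{1,2\}}=2$, and all other standard classes pair to zero.

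The action of $\sigma$ on tautological classes is $\sigma^*\psi_i=\psi_{\sigma(i)}$ and $\sigma^*\delta_{0:S}=\delta_{0:\sigma(S)}$, fixing $\lambda$ and $\delta_{\rm{irr}}$. The updated values after $\sigma_*$ that matter are $\sigma_*\Gamma\cdot\psi_{10}=5$ and $\sigma_*\Gamma\cdot\delta_{0:\{2,10\}}=2$, with $\sigma_*\Gamma\cdot\delta_{0:\{1,2\}}=0$. For $\varphi_*$ I invoke the standard comparison formulas $\varphi^*\psi_i=\psi_i-\delta_{0:\{i,2\}}$ and $\varphi^*\delta_{0:S}=\delta_{0:S}+\delta_{0:S\cup\{2\}}$, together with $\varphi^*\lambda=\lambda$ and $\varphi^*\delta_{\rm{irr}}=\delta_{\rm{irr}}$; the decisive interaction is $\varphi_*\sigma_*\Gamma\cdot\psi_{10}=\sigma_*\Gamma\cdot(\psi_{10}-\delta_{0:\{2,10\}})=5-2=3$, yielding the value $3$ in the statement. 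Relabeling the surviving marking $1$ as $r$ places $\varphi_*\sigma_*\Gamma$ in $\Mbar{10}{\{3,\ldots,10\}\cup\{r\}}$, and the clutching map $\theta$ pulls back as $\theta^*\lambda=\lambda$, $\theta^*\delta_{\rm{irr}}=\delta_{\rm{irr}}$, $\theta^*\psi_i=\psi_i$ for $i\in\{3,\ldots,10\}$, together with the normal-bundle identity $\theta^*\delta_{0:\{1,2\}}=-\psi_r$: since $\Mbar{0}{3}$ is a point, the $\psi$-class on the rational-tail side of the node is trivial. The projection formula then gives $\underline{\Theta}\cdot\delta_{0:\{1,2\}}=(\varphi_*\sigma_*\Gamma)\cdot(-\psi_r)=-2$, while the remaining intersections are transported unchanged.

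For the covering property, Lemma~\ref{lemma1} already establishes that $\Gamma(11,1,8)$ sweeps out $\overline{D}_{10}$; since $\overline{D}_{10}$ is $S_{10}$-invariant, the same holds for $\sigma_*\Gamma$. The restriction $\varphi|_{\overline{D}_{10}}\colon\overline{D}_{10}\to\Mbar{10}{9}$ is surjective: given a general $[C,x_1,x_3,\ldots,x_{10}]\in\Mbar{10}{9}$, the divisor $D:=x_1+x_3+\cdots+x_{10}$ is effective of degree $g-1=9$, and Riemann--Roch together with Serre duality gives $h^0(K_C-D)=h^0(D)\geq 1$; choosing $x_2$ in the support of any divisor in $|K_C-D|$ makes $K_C-D-x_2$ effective, which forces $h^0(D+x_2)\geq 2$ and places $[C,x_1,\ldots,x_{10}]$ in $\overline{D}_{10}$. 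Hence $\varphi_*\sigma_*\Gamma$ is a covering curve for $\Mbar{10}{9}$, and $\theta$ transports this to a covering family of $\delta_{0:\{1,2\}}$. The main technical delicacies are tracking the label permutations through $\sigma_*$, $\varphi_*$, and the relabeling into the source of $\theta$, together with the normal-bundle identity invoked at the clutching step.
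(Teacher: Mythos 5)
Your computation is correct and is exactly what the paper's one-line proof intends: specialize Proposition~\ref{intersections} to $(g,\delta,l)=(11,1,8)$ and push the numbers through $\sigma_*$, $\varphi_*$, $\theta_*$ using the standard comparison formulas $\varphi^*\psi_i=\psi_i-\delta_{0:\{i,2\}}$ and $\theta^*\delta_{0:\{1,2\}}=-\psi_r$ (the key interactions being $5-2=3$ for $\psi_{10}$ and $-\psi_r\mapsto -2$ for $\delta_{0:\{1,2\}}$), and all resulting values agree with the lemma and with the later uses $\underline{\Theta}\cdot K_{\Mbar{10}{10}}=-1$, $\underline{\Theta}\cdot\overline{D}_{10}=1$. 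Your Riemann--Roch argument for the surjectivity of $\varphi|_{\overline{D}_{10}}$ is a correct justification of the covering-curve claim, which the paper asserts without proof.
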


\begin{proof}
This follows by Proposition \ref{intersections} and standard pull-back formulas for $\lambda, \psi_i$ and the boundary divisors, cf. \cite[Ch. XVII, Lemma 4.38]{ACG}.
\end{proof}

We fix the effective divisor
$$A:=K_{\Mbar{10}{10}}-\overline{D}_{10}-\sum_{|S|\geq 2}c_{0:S}\delta_{0:S}=2\pi^*\overline{K}+\sum_{\substack{1\leq i\leq 5\\ S}}c_{i:S}\delta_{i:S},$$
see (\ref{canM1010}).

\begin{proposition}
\label{mainProp}
The Kodaira dimension of $\Mbar{10}{10}$ equals the Iitaka dimension of $A$.
\end{proposition}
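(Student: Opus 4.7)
The inequality $\kappa(K_{\Mbar{10}{10}}) \geq \kappa(A)$ is immediate from \eqref{canM1010}: the difference $K_{\Mbar{10}{10}} - A = \overline{D}_{10} + \sum_{|S|\geq 2} c_{0:S}\delta_{0:S}$ is an effective $\mathbb{Q}$-divisor, so for $n$ sufficiently divisible the canonical section of $n(K_{\Mbar{10}{10}} - A)$ yields an injection $H^0(nA) \hookrightarrow H^0(nK_{\Mbar{10}{10}})$. The plan for the reverse inequality is to show that this injection is an isomorphism, equivalently that every effective $D \in |nK_{\Mbar{10}{10}}|$ satisfies $D \geq n\overline{D}_{10} + n\sum_{|S|\geq 2} c_{0:S}\delta_{0:S}$ as a divisor.

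The main tools are Lemmas~\ref{lemma1} and~\ref{lemma2} together with the $S_{10}$-action permuting markings. For each pair $\{i,j\}\subset\{1,\dots,10\}$, permutation produces a covering curve $\Gamma_{ij}$ of $\overline{D}_{10}$ and a covering curve $\underline{\Theta}_{ij}$ of $\delta_{0:\{i,j\}}$, with intersection profiles obtained from the two lemmas by relabeling. Fix a pair and write $D = m_0\overline{D}_{10} + m_{ij}\delta_{0:\{i,j\}} + D'$ with $D'$ containing neither component in its support. Since a general member of each covering family avoids $\mathrm{supp}(D')$, one has $\Gamma_{ij}\cdot D' \geq 0$ and $\underline{\Theta}_{ij}\cdot D' \geq 0$. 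Substituting into the identities $\Gamma_{ij}\cdot D = 0$ and $\underline{\Theta}_{ij}\cdot D = -n$ yields the coupled linear system
\[
m_0 \geq m_{ij}, \qquad 2m_{ij} - m_0 \geq n,
\]
which forces $m_0, m_{ij} \geq n$. Since $c_{0:\{i,j\}} = \binom{3}{2} - 2 = 1$, this delivers exactly the required multiplicities along $\overline{D}_{10}$ and along every $\delta_{0:\{i,j\}}$.

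For the boundaries $\delta_{0:S}$ with $|S| \geq 3$, the plan is to construct analogous covering curves by iterating the clutching construction of Lemma~\ref{lemma2}, attaching rational tails carrying the $|S|$ markings of $S$ to curves of the form $\Gamma(11, 1, 10 - |S|)$, and possibly chaining several such attachments. The corresponding intersection numbers are computed in parallel to Proposition~\ref{intersections} and Lemma~\ref{lemma2}, and coupling the resulting inequalities with those from the $\Gamma_{ij}$-family produces the sharp bound $\mathrm{mult}_{\delta_{0:S}}(D) \geq nc_{0:S}$ for all $|S|\geq 2$, which is precisely what is needed.

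The hard part will be that neither $\Gamma_{ij}$ nor $\underline{\Theta}_{ij}$ alone delivers a sharp multiplicity bound, because both curves have zero intersection with $A$ and the individual inequalities give only weak information. The argument succeeds only because the two covering curves are numerically balanced against the coefficients in the decomposition $K_{\Mbar{10}{10}} = \overline{D}_{10} + A + \sum c_{0:S}\delta_{0:S}$, so that only when the inequalities from both families are coupled does the sharp bound emerge. Producing the analogous covering curves for $|S|\geq 3$ and verifying the corresponding numerical balancing uniformly is the technical crux of the proof.
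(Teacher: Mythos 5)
Your overall strategy is the same as the paper's: use the effective decomposition \eqref{canM1010} for the easy inequality, then force every member of $|nK_{\Mbar{10}{10}}|$ to contain $n\overline{D}_{10}+n\sum c_{0:S}\delta_{0:S}$ by intersecting with the covering curves of Lemmas~\ref{lemma1} and~\ref{lemma2}. For the components with $|S|=2$ your argument is correct and in fact cleaner than the paper's: where the paper peels off $\delta_{0:\{1,2\}}$ and $\overline{D}_{10}$ in increments $\frac{2^m-1}{2^m}$ and passes to the limit, you solve the two inequalities $m_0\geq m_{ij}$ and $2m_{ij}-m_0\geq n$ (coming from $\Gamma_{ij}\cdot D=0$, $\underline{\Theta}_{ij}\cdot D=-n$, and nonnegativity against the residual $D'$) in closed form to get $m_0,m_{ij}\geq n$ directly. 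This is a genuine, if modest, simplification of the same underlying mechanism.

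The gap is the case $|S|=t\geq 3$, which you explicitly leave as a ``plan.'' This is not a routine extension: the required multiplicity is $nc_{0:S}=n\left(\binom{t+1}{2}-2\right)$, which grows with $t$, so you must exhibit, for each $T$ with $|T|=t$, a covering family of $\delta_{0:T}$ whose intersection numbers are tuned so that, after the contributions of $\overline{D}_{10}$ and of the already-handled $\delta_{0:S}$ with $|S|<t$ are subtracted, the deficit forces exactly this multiplicity. The paper does this concretely: it takes $\underline{\Theta}_T:=\theta_*\varphi_*\sigma_*\Gamma(11,1,8)$, where $\varphi$ forgets the markings $2,\dots,t$ and $\theta$ reattaches them on a rational tail, checks $\underline{\Theta}_T\cdot\delta_{0:T}=-2$, and computes
\[
\underline{\Theta}_T\cdot\Bigl(K_{\Mbar{10}{10}}-\textstyle\sum_{|S|\leq t-1}c_{0:S}\,\delta_{0:S}-\overline{D}_{10}\Bigr)=-2\left(\tbinom{t+1}{2}-2\right),
\]
so that an induction on $t$ delivers the sharp bound. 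Your proposed curves, rational tails attached to $\Gamma(11,1,10-|S|)$, are a different (and unverified) choice; without the analogue of this computation the ``numerical balancing'' you invoke is precisely the content that is missing, and the proof is incomplete as written. Note also that the base curves must still be covering curves of $\delta_{0:T}$ for the nonnegativity step to apply, which is why the paper routes everything through forgetful and clutching pushforwards of the single curve $\Gamma(11,1,8)$, whose generality as a point of $\overline{D}_{10}$ is guaranteed by Lemma~\ref{lemma1}.
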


\begin{proof}
By Lemmas \ref{lemma1} and \ref{lemma2}, $\Gamma(11,1,8)$ and $\underline{\Theta}$ form a covering family of curves for Logan's divisor $\overline{D}_{10}$ and $\delta_{0:\{1,2\}}$ respectively. Observe
$$ \underline{\Theta}\cdot K_{\Mbar{10}{10}}=-1,\hspace{1cm} \underline{\Theta}\cdot\delta_{0:\{1,2\}}=-2,\hspace{1cm} \underline{\Theta}\cdot \overline{D}_{10}=1$$
$$ \Gamma(11,1,8)\cdot K_{\Mbar{10}{10}}=0,\hspace{1cm} \Gamma(11,1,8)\cdot\delta_{0:\{1,2\}}=2,\hspace{1cm} \Gamma(11,1,8)\cdot \overline{D}_{10}=-2.$$
Hence we obtain for $m\in\mathbb{N}$,
\begin{equation} \underline{\Theta}\cdot \left(K_{\Mbar{10}{10}} -\frac{2^m-1}{2^m}\delta_{0:\{1,2\}}-\frac{2^m-1}{2^m}\overline{D}_{10}  \right)=\frac{-1}{2^m}\end{equation}
and
\begin{equation} \Gamma(11,1,8)\cdot \left(K_{\Mbar{10}{10}} -\frac{2^{m+1}-1}{2^{m+1}}\delta_{0:\{1,2\}}-\frac{2^m-1}{2^m}\overline{D}_{10}  \right)=\frac{-1}{2^m}.\end{equation}
Provided the divisor on the LHS is effective, we obtain
\begin{equation}
\label{takedelta}
\resizebox{0.9\hsize}{!}{$
\left|n\left(K_{\Mbar{10}{10}} -{\frac{2^m-1}{2^m}}\delta_{0:\{1,2\}}-\frac{2^m-1}{2^m}\overline{D}_{10}  \right)\right|=\left|n\left(K_{\Mbar{10}{10}} -\frac{2^{m+1}-1}{2^{m+1}}\delta_{0:\{1,2\}}-\frac{2^m-1}{2^m}\overline{D}_{10}  \right)  \right|+\frac{n}{2^{m+1}}\delta_{0:\{1,2\}}
$}
\end{equation}
and
\begin{equation}\label{takelogan} 
\resizebox{0.9\hsize}{!}{$
\left|n\left(K_{\Mbar{10}{10}} -\frac{2^{m+1}-1}{2^{m+1}}\delta_{0:\{1,2\}}-\frac{2^m-1}{2^m}\overline{D}_{10}  \right)  \right|=
\left|n\left(K_{\Mbar{10}{10}} -{\frac{2^{m+1}-1}{2^{m+1}}}\delta_{0:\{1,2\}}-\frac{2^{m+1}-1}{2^{m+1}}\overline{D}_{10}  \right)\right|+\frac{n}{2^{m+1}}\overline{D}_{10}
$}
\end{equation}
Hence starting at $m=0$ where the LHS of Equation~(\ref{takedelta}) is $|nK_{\Mbar{10}{10}}|$ and hence effective, iteratively apply Equation (\ref{takedelta}) then (\ref{takelogan}) for increasing values of $m$ to obtain in the limit
\begin{equation*}
\left|n\left(K_{\Mbar{10}{10}} \right)  \right|=
\left|n\left(K_{\Mbar{10}{10}} -\delta_{0:\{1,2\}}-\overline{D}_{10}  \right)\right|+n\delta_{0:\{1,2\}}+n\overline{D}_{10}
\end{equation*}
Observe that by symmetry or permuting the markings of the curve $\underline{\Theta}$ we obtain
$$\left|n\left(K_{\Mbar{10}{10}} \right)  \right|=
\left|n\left(K_{\Mbar{10}{10}} -\sum_{|S|=2}\delta_{0:S}-\overline{D}_{10}  \right)\right|+n\sum_{|S|=2}\delta_{0:S}+n\overline{D}_{10}.    $$
We now repeat this process for the other boundary components of the form $\delta_{0:S}$. Fix 
$$T=\{1,2,\dots,t\}\hbox{ for }t=2,\dots 10.$$ 
Let 
$$\sigma:\Mbar{10}{10}\longrightarrow\Mbar{10}{10}$$ 
permute the first and tenth markings, 
$$\varphi:\Mbar{10}{10}\longrightarrow\Mbar{10}{11-s}$$
forget the second to $t$th marked points and 
$$\theta:\Mbar{10}{\{t+1,\ldots,10\}\cup\{r\}}\to\Mbar{10}{10}$$
be the boundary map that associates to any $\{r\}\cup\{t+1,\ldots,10\}$-pointed stable curve of genus $10$, the $10$-pointed curve obtained by gluing to it a $\{1,\dots,t,s\}$-pointed rational tail by identifying $r$ with $s$. 

The curve $\underline{\Theta}_T:=\theta_*\varphi_*\sigma_*\Gamma(11,1,8)$ is a covering curve for $\delta_{0:\{1,\dots,t\}}$ with $\underline{\Theta}_T\cdot \delta_{0:\{1,\dots,t\}}=-2$. Further, observe
$$\underline{\Theta}_T\cdot \left(K_{\Mbar{10}{10}}-\sum_{|S|\leq t-1}\delta_{0:T}-\overline{D}_{10}\right)=-2\left(\frac{(t+1)t}{2}-2\right)$$
where $-(t+1)t/2$ is the coefficient of $\delta_{0:T}$ in $\overline{D}_{10}$. Hence by symmetry we inductively obtain
$$
\left|n\left(K_{\Mbar{10}{10}} \right)  \right|=
\left|nA\right|+n\sum_{T\subset [10]}\left(\frac{(t+1)t}{2}-2\right)\delta_{0:T}+n\overline{D}_{10}.$$
\end{proof}

Now we prove our main theorem.

\begin{proof}[Proof of Theorem \ref{theorem1}]
By Proposition \ref{mainProp}, we have reduced our problem to compute $\kappa(A)$. For $i=1,\ldots,5$ we fix $d_i=\max\left\{c_{i:S}\mid S\subset\{1,\ldots,10\}\right\}$ and 
$$B=2\overline{\mathcal{K}}+d_1\delta_1+\ldots+d_5\delta_5\in\rm{Pic}\left(\overline{\mathcal{M}}_{10}\right).$$
The divisor $B$ is supported over the K3 locus and higher boundary divisors. By Proposition \ref{intersections}, the K3 locus is covered by curves $\Gamma(10,0,0)$ with
$$\Gamma(10,0,0)\cdot \overline{\mathcal{K}}=-1\hspace{0.3cm}\text{ and  }\hspace{0.3cm}\Gamma(10,0,0)\cdot B=-2$$
see also \cite[Lemma 2.1]{FP}. The divisor $d_1\delta_1+...+d_5\delta_5$ is well-known to be rigid for non-negative $d_i$. For example, $\delta_i$ is covered by curves 
$\xi_*\Gamma(10-i,0,1)$, where 
$$\xi:\Mbar{10-i}{1}\to\overline{\mathcal{M}}_{10}$$
is the map that attaches to $[C,x]\in\Mbar{10-i}{1}$ a fixed general curve of genus $i$ by identifying $x\in C$ with a general point on the fixed curve. Hence
$$\xi_*\Gamma(10-i,0,1)\cdot\delta_i=-1\hspace{0.3cm}\text{ and  }\hspace{0.3cm}\xi_*\Gamma(10-i,0,1)\cdot(d_1\delta_1+...+d_5\delta_5)=-d_i.$$
Thus,
$$\kappa(A)\leq\kappa(\pi^*B)=0.$$

\end{proof}

\section{The Kodaira dimension of $\Fgn{11}{n}$ and symmetric quotients of $\Mbar{10}{10}$}\label{SymQuot}

As mentioned in the introduction, much less in known about the Kodaira dimension of $\Fgn{g}{n}$. We know it ranges between $-\infty$ and $19$, and that as soon as ${\rm{Kod}}\left(\Fgn{g}{n}\right)$ is maximal, ${\rm{Kod}}\left(\Fgn{g}{m}\right)=19$ for all $m\geq n$.\\

Consider the universal K3 surface $\pi:\Fgn{g}{1}\to{\mathcal{F}}_g$. Recall that (e.g. \cite[Section 0]{PY}) 
$$c_1(\omega_{\pi})=\pi^*\lambda,$$
where $\lambda=c_1({\mathbb{E}})$ is the first Chern class of the Hodge line bundle, whose fiber over $(S,H)\in {\mathcal{F}}_g$ is $H^0(S,K_S)$. Moreover, $\lambda$ is the restriction of an ample class in the Baily-Borel compactification $\overline{\mathcal{F}}_g^*$ of ${\mathcal{F}}_g$. Observe that 
\begin{equation}
\label{Fgn}
K_{\Fgn{g}{n+1}}=p^*K_{\Fgn{g}{n}}\otimes q^*\omega_\pi,
\end{equation}
where $p:\Fgn{g}{n+1}\to\Fgn{g}{n}$ forgets the last marked point and $q:\Fgn{g}{n+1}\to \Fgn{g}{1}$ forgets all but the last marked point.

In analogy with $\Mbar{g}{n}$ where for fixed $g$ the change from uniruled to general type occurs suddenly as $n$ grows, cf. \S1, for K3 surfaces from (\ref{Fgn}) it is reasonable to expect that the transition is either immediate or happens in one step, i.e., for fixed $g$ there is at most one $n$ such that $\Fgn{g}{n}$ is of intermediate Kodaira type. There are several compactifications of $\mathcal{F}_g$ with singularities that do not impose adjunction conditions, i.e, where pluricanonical forms extend to a desingularization (for example canonical singularities for toroidal compactifications, see \cite{G}) and the Hodge class $\lambda$ extends to a big class. The difficulty is that none of those come with a universal family. The question is:

Is there a compactification of the universal K3 surface $\overline{\pi}:\overline{\mathcal{F}}_{g,1}\to\overline{\mathcal{F}}_g$, with canonical singularities, where $\kappa(\omega_{\overline{\pi}})=19$? 

If this question has a positive answer, the Kodaira classification of $\Fgn{g}{n}$ for fixed $g$ is complete if a value of $n$ such that $\Fgn{g}{n}$ is of intermediate type is found. 

Recall that in genus $g=11$, Mukai \cite{M6} proved that a general curve $C$ of genus $11$ has a unique $K3$ extension $S$ up to isomorphism, moreover $\Pic(S)=\zz\cdot\left[C\right]$. This construction induces a rational map
$$\begin{array}{rcl}
\psi:\M{11}{n}&\dashrightarrow&\Fgn{11}{n}\\
\left[C,x_1,\ldots,x_n\right]&\mapsto&\left(S,{\mathcal{O}}_S(C),x_1,\ldots,x_n\right),
\end{array}$$
which is dominant for $n\leq 11$ and birational for $n=11$.\\

From this follows the unirationality of $\Fg{11}$. On the other hand in \cite[Thm. 5.1]{FV2} it is proved that $\Fgn{11}{11}$ has positive Kodaira dimension equal to $19$, and it was established in \cite{B} that $\Fgn{11}{n}$ is unirational for $n\leq 6$ and uniruled for $n=7$. Theorem \ref{theorem2} leaves $n=8$ and $10$ as the only open cases in $g=11$.

We now give a proof of Theorem \ref{theorem2}.

\begin{proof}[Proof of Theorem \ref{theorem2}]
It was established in \cite[Thm. 0.1]{FV1} that the universal Jacobian of degree $10$ over $\overline{\mathcal{M}}_{10}$ has Kodaira dimension zero. The target of the finite map
$$\Mbar{10}{10}\to \Mbar{10}{10}\big/S_{10}$$
is birational to the universal Jacobian $\sh{J}_{10}^{10}$ and by Theorem \ref{theorem1}, every intermediate quotient has Kodaira dimension zero, in particular 
$${\rm{Kod}}\left(\Mbar{10}{10}\big/S_2\right)=0.$$
Using the birational isomorphism 
$$\Mbar{10}{10}\big/S_2\overset{\sim}{\dashrightarrow}\Fgn{11}{9}$$
constructed in \cite[Thm. 03]{B} we conclude our result. 
\end{proof}

\section{Further applications}\label{FurtherApplications}
\subsection{An extremal nef curve in $\overline{\mathcal{M}}_{10}$}
Consider the curve $B:=\pi_*\Gamma(11,1,0)$ in $\overline{\mathcal{M}}_{10}$ where $\pi:\Mbar{10}{2}\longrightarrow \overline{\mathcal{M}}_{10}$ forgets both marked points. 

\begin{proposition}
$B$ is a nef curve and forms an extremal ray of the cone of nef curves in $ \overline{\mathcal{M}}_{10}$. Further, $\mathbb{Q}^+\langle \overline{\mathcal{K}},\delta_1,\dots \delta_5\rangle$ forms a face of the effective cone of divisors in $ \overline{\mathcal{M}}_{10}$.
\end{proposition}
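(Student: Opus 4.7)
The plan is to extract the intersection numbers of $B$ from Proposition~\ref{intersections}, invoke Theorem~\ref{theorem4} to establish nefness, and use the covering curve $\Gamma(10,0,0)$ of $\overline{\mathcal{K}}$ to cut out the orthogonal face.

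First I would substitute $(g,\delta,\ell)=(11,1,0)$ into Proposition~\ref{intersections} and push forward $\Gamma(11,1,0)$ via $\pi$. Since $\pi^*\lambda=\lambda$, $\pi^*\delta_{\mathrm{irr}}=\delta_{\mathrm{irr}}$, and each $\pi^*\delta_i$ for $i\geq 1$ decomposes as a sum of classes $\delta_{i:S}$ all orthogonal to $\Gamma(11,1,0)$, this gives
\begin{equation*}
B\cdot\lambda=22,\qquad B\cdot\delta_{\mathrm{irr}}=154,\qquad B\cdot\delta_i=0\ \text{ for }1\leq i\leq 5,
\end{equation*}
so $B\cdot\overline{\mathcal{K}}=22\cdot 7-154=0$ and any class $D\equiv a\lambda-b_0\delta_{\mathrm{irr}}-\sum_i b_i\delta_i$ satisfies $B\cdot D=22(a-7b_0)$. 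For nefness, boundary divisors give $B\cdot D\in\{0,154\}$; for the closure of an irreducible effective divisor $D$ on $\mathcal{M}_{10}$, Theorem~\ref{theorem4} applied with $(g,\delta)=(10,1)$ forces $s(D)\geq 7$, since the locus of normalizations of $1$-nodal curves on K3 surfaces of genus $11$ is dense in $\mathcal{M}_{10}$ (every generic genus $11$ curve lies on a K3 of genus $11$ by Mukai, and a dimension count shows the fibers of the normalization map are $2$-dimensional), leaving no room for a proper divisor to contain it.

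Next let $D$ be any effective divisor with $B\cdot D=0$; by nefness each irreducible component $D_j$ has $B\cdot D_j=0$. A boundary component must be $\delta_i$ for some $i\in\{1,\ldots,5\}$, since $B\cdot\delta_{\mathrm{irr}}\neq 0$. A non-boundary irreducible component $D_j$ has slope exactly $7$; Proposition~\ref{intersections} with $(g,\delta,\ell)=(10,0,0)$ produces the covering curve $\Gamma(10,0,0)$ of $\overline{\mathcal{K}}$ (cf.\ \cite[Lem.~2.1]{FP}) with $\Gamma(10,0,0)\cdot\lambda=11$ and $\Gamma(10,0,0)\cdot\delta_{\mathrm{irr}}=78$, so $\Gamma(10,0,0)\cdot D_j=11 a_j-78 b_{0,j}=-b_{0,j}<0$, forcing $\overline{\mathcal{K}}\subset D_j$ and hence $D_j=\overline{\mathcal{K}}$ by irreducibility. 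Therefore $\{D\text{ effective}:B\cdot D=0\}=\mathbb{Q}^+\langle\overline{\mathcal{K}},\delta_1,\ldots,\delta_5\rangle$; the six linearly independent generators span a codimension-one subspace of $\Pic_\mathbb{Q}(\overline{\mathcal{M}}_{10})\cong\mathbb{Q}^7$, so this set is a facet of the effective cone, and dually $B$ spans an extremal ray of the nef curve cone.

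The delicate point is the invocation of Theorem~\ref{theorem4} in the critical case $g=10$, $\delta=1$, which rests on the density of the $1$-nodal K3-normalization locus in $\mathcal{M}_{10}$ via Mukai's classification of generic genus $11$ curves on K3s. Once that density is in hand, the rest of the argument is a short intersection computation together with the covering-curve argument already packaged in Section~\ref{Lefschetz}.
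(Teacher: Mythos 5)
Your proposal is correct in substance and rests on the same two inputs as the paper's proof---the intersection numbers of Proposition~\ref{intersections} and the fact that a general genus $10$ curve is the normalization of a $1$-nodal curve in the primitive linear system on a genus $11$ $K3$ surface---but it packages them differently at both steps. For nefness the paper argues directly: $B$ is an irreducible member of a family covering a dense subset of $\overline{\mathcal{M}}_{10}$, so no effective divisor contains a general member and $B\cdot D\geq 0$. Your detour through the \emph{statement} of Theorem~\ref{theorem4} is slightly miscalibrated: with the convention $s(D)=a/\min_i b_i$, knowing $s(D)\geq 7$ gives $a\geq 7\min_i b_i$ but not $a\geq 7b_0$, so it does not by itself yield $B\cdot D=22(a-7b_0)\geq 0$. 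What you actually need is the mechanism inside the proof of Theorem~\ref{theorem4}: $B\cdot D<0$ forces $\mathcal{K}_{10,1}\subset D$, which is absurd because $\mathcal{K}_{10,1}$ is dense. Phrased that way your argument closes and coincides with the paper's; note also that the density should be sourced from the birational isomorphism $\overline{\mathcal{M}}_{10,10}\big/S_2\dashrightarrow\mathcal{F}_{11,9}$ of \cite{B} rather than from a dimension count, which only establishes the expected dimension, not dominance. For the face your route is genuinely different and in fact more complete than the paper's: the paper merely notes that $\overline{\mathcal{K}},\delta_1,\dots,\delta_5$ are extremal, orthogonal to the nef class $B$, and span a hyperplane in the rank-$7$ Picard group, whereas you prove that every effective divisor orthogonal to $B$ is a non-negative combination of these six classes, using $\Gamma(10,0,0)\cdot D_j=11a_j-78b_{0,j}=-b_{0,j}<0$ to force any non-boundary component to equal $\overline{\mathcal{K}}$. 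This identifies $B^{\perp}\cap\operatorname{Eff}(\overline{\mathcal{M}}_{10})$ exactly as $\mathbb{Q}^{+}\langle\overline{\mathcal{K}},\delta_1,\dots,\delta_5\rangle$ and gives the extremality of the ray spanned by $B$ by duality; the paper's shortcut is quicker but leans on the quoted extremality of the six generators, which your argument does not need.
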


\begin{proof}
The general curve $B$ is irreducible and as a general genus $g=10$ curve appears as the normalisation of a genus $g=11$ nodal curve on a $K3$ surface, $B$ covers a Zariski dense subset of $\overline{\mathcal{M}}_{10}$ and hence $B$ is nef. Observe $B\cdot\lambda=22$ and $B\cdot \delta_{\rm{irr}}=154$ with intersections equal to zero with all other generators. But then
$$B\cdot \overline{\mathcal{K}}=B\cdot\delta_1=B\cdot\delta_2=B\cdot\delta_3=B\cdot\delta_4=B\cdot\delta_5=0$$
and as all of the above divisors are known to be extremal and the rank of $\Pic(\overline{\mathcal{M}}_g)$ is 7, $\mathbb{Q}^+\langle \overline{\mathcal{K}},\delta_1,\dots \delta_5\rangle$ forms a face of the effective cone. 
\end{proof}

\subsection{Slopes of divisors on $\Mgbar$}
Finally, as corollary of Proposition \ref{intersections} we have the following. See \cite[Prop. 2.2]{FP} for the case $\delta=0$.

\theoremfour*

\begin{proof}
Let $\pi:\Mbar{g}{2\delta}\to\Mgbar$ be the forgetful map and $B=\pi_*\Gamma\left(g+\delta,\delta,0\right)$ the push forward of the curve constructed in Section 2. By Proposition \ref{intersections}
$$\frac{B\cdot\delta}{B\cdot\lambda}=6+\frac{12-\delta}{g+1}>s(D).$$
Therefore $B\cdot D<0$. Since $B$ is irreducible and covers the locus $\mathcal{K}_{g,\delta}\subset\Mg$ of curves that are the normalization of $\delta$-nodal curves lying on a $K3$ surface of genus $g+\delta$, it follows that $\mathcal{K}_{g,\delta}\subset D$.  
\end{proof}



\begin{thebibliography}{00}

\bibitem{ACG} 
Arbarello, E., Cornalba, M., and P. A. Griffiths. 
\textit{Geometry of algebraic curves. Vol. II}, Grundlehren Math. Wiss., vol. 267. 
New York: Springer-Verlag, 2011.

\bibitem{B} 
Barros, I. 
``Geometry of the moduli space of $n$-pointed $K3$ surfaces of genus $11$.'' 
\textit{Bull. Lond. Math. Soc.} 50, (2018): 1071--1084.

\bibitem{BFV}
Bini, G., Fontanari, C., and F. Viviani.
``On the birational geometry of the universal Picard variety.''
\textit{Int. Math. Res. Not. IMRN} 2012, no. 4 (2012): 740--780.
  
\bibitem{BV} 
Bruno, A., and A. Verra. 
``$\mathcal{M}_{15}$ is rationally connected.''
\textit{in: Projective  varieties  with  unexpected  properties, De Gruyter Proc. Math.} (2005): 51--65.

\bibitem{CHM} 
Caporaso, L., Harris, J., and B. Mazur. 
``Uniformity of rational points.''
\textit{J. Amer. Math. Soc.} 10, (1997): 1--35.

\bibitem{CR1} 
Chang, M. C., and Z. Ran.
``Unirationality of the moduli space of curves of genus $11, 13$ (and $12$).''
\textit{Invent. Math.} 76, (1984): 41--54.

\bibitem{CR2} 
Chang, M. C.,  and Z. Ran. 
``The Kodaira dimension of the moduli space of curves of genus $15$.'' 
\textit{J. Differential Geom.} 24, (1986): 205--220.

\bibitem{CU} 
Cukierman, F., and D. Ulmer. 
``Curves of genus $10$ on K3 surfaces.''
\textit{Compos. Math.} 89, (1993): 81--90.

\bibitem{EH1} 
Eisenbud, D., and J. Harris. 
``Limit linear series: Basic theory.''
\textit{Invent. Math.} 85, (1986): 337--371.

\bibitem{EH2} 
Eisenbud, D., and J. Harris.
``The Kodaira dimension of the moduli space of curves of genus $\geq23$.''
\textit{Invent. Math.} 90, (1987): 359--387.

\bibitem{F}
Farkas, G.
``Koszul divisors on moduli spaces of curves.''
\textit{Amer. J. Math.} 131, (2009): 819--867.

\bibitem{FP}
Farkas, G., and M. Popa. 
``Effective divisors on $\overline{\mathcal{M}}_{g}$, curves on $K3$ surfaces and the Slope Conjecture.''
\textit{J. Algebraic Geom.} 14, (2005): 151--174.
 
\bibitem{FV1}
Farkas, G., and A. Verra. 
``The classification of universal Jacobians over the moduli space of curves.''
\textit{Comment. Math. Helv.} 88, (2013): 587--611.

\bibitem{FV2}
Farkas, G., and A. Verra. 
``The universal $K3$ surface of genus $14$ via cubic fourfolds.''
\textit{J. Math. Pures Appl.} 111, (2018): 1--20.

\bibitem{G}
Giovenzana, L. 
``The perfect cone compactification of quotients of type IV domains.''
preprint, arXiv:1904.08638.

\bibitem{GHS} 
Gritsenko, V., Hulek, K., and G. K. Sankaran. 
``The Kodaira dimension of the moduli space of K3 surfaces.''
\textit{Invent. Math.} 169, (2007): 519--567.

\bibitem{HM}
Harris, J., and D. Mumford. 
``On the Kodaira dimension of $\overline{\mathcal{M}}_{g}$.''
\textit{Invent. Math.} 67, (1982): 23--88.

\bibitem{K}
Kawamata, Y. 
``The Kodaira dimension of certain fibre spaces.''
\textit{Proc. Japan Acad. Ser. A Math. Sci.} 5, (1979): 406--408.

\bibitem{L} 
Logan, A. 
``The Kodaira dimension of moduli spaces of curves with marked points.''
\textit{Amer. J. Math.} 125, (2003): 105--138.

\bibitem{M1} 
Mukai, S. 
``Curves, $K3$ surfaces and Fano $3$-folds of genus $\leq 10$.''
\textit{in: Algebraic Geometry and Commutative Algebra in Honor of M. Nagata} 1,  (1988): 357--377.

\bibitem{M2} 
Mukai, S. 
``Curves and symmetric spaces II.'' 
\textit{Ann. of Math. (2)} 172, (2010): 1539--1558.

\bibitem{M3}
Mukai, S. 
``Polarized $K3$ surfaces of genus $18$ and $20$.''
\textit{in: Complex Projective Geometry, London Math. Soc. Lecture Notes Ser.} 179, (1992): 264--276. 
Cambridge University Press.

\bibitem{M4} 
Mukai, S. 
``Polarised $K3$ surfaces of genus $13$.''
\textit{in: Moduli Spaces and Arithmetic Geometry (Kyoto 2004), Adv. Stud. Pure Math.} 45, (2006): 315--326.

\bibitem{M5}
Mukai, S. 
``$K3$ surfaces of genus $16$.'' 
\textit{RIMS} preprint 1743, (2012),  available at http://www.kurims.kyoto-u.ac.jp/preprint/file/RIMS1743.pdf.

\bibitem{M6}
Mukai, S. 
``Curves and $K3$ surfaces of genus eleven.''
\textit{in: Moduli of vector bundles, Lect. Pure Appl. Math.} 179, (1996): 189--197.

\bibitem{PY} 
Pandharipande, R., and Q. Yin. 
``Relations in the tautological ring of the moduli space of K3 surfaces.'' 
\textit{J. Eur. Math. Soc. (JEMS)} (2016), in press, arXiv:1607.08758.

\bibitem{S} 
Severi, F. 
``Sulla classificazione delle curve algebriche e sul teorema d'esistenza di
Riemann.'' 
\textit{Atti Accad. Naz. Lincei Rend. Lincei Mat. Appl.} 24, (1915): 877--888.

\bibitem{Se}
Sernesi, E. 
``L'unirazionalit\'{a} della variet\'{a} dei moduli delle curve di genere 12.'' 
\textit{Ann. Sc. Norm. Super. Cl. Sci.(5)} 8, (1981): 405--439.




\end{thebibliography}
\end{document}